\crefname{enumi}{item}{items}
\newcommand{\R}{{\mathbb R}}
\newcommand{\N}{{\mathbb N}}
\newcommand{\EE}{\mathbb E}
\newcommand{\bEE}[1]{\mathbb E\bigl[ #1 \bigr]}
\newcommand{\bbbEE}[1]{\mathbb E\biggl[ #1 \biggr]}
\newcommand{\bp}[1]{\bigl(#1\bigr)}
\newcommand{\bbp}[1]{\Bigl(#1\Bigr)}
\newcommand{\bbbp}[1]{\biggl(#1\biggr)}
\newcommand{\bbbbp}[1]{\Biggl(#1\Biggr)}
\newcommand{\br}[1]{[#1]}
\newcommand{\bbr}[1]{\bigl[#1\bigr]}
\newcommand{\bbbbr}[1]{\biggl[#1\biggr]}
\newcommand{\bbbbbr}[1]{\Biggl[#1\Biggr]}
\newcommand{\PP}{{\mathbb P}}
\newcommand{\diff}{\mathrm{d}}
\newcommand{\1}{{\mathbbm 1}}
\newcommand{\eps}{\varepsilon}
\newcommand{\mc}[1]{\mathcal{#1}}
\newcommand{\norm}[1]{\lVert #1\rVert}
\newcommand{\abs}[1]{\lvert #1 \rvert}
\newcommand{\babs}[1]{\bigl\lvert #1 \bigr\rvert}
\def\beq#1\eeq{\begin{equation}#1\end{equation}}
\def\ba#1\ea{\begin{equation}\begin{split}#1\end{split}\end{equation}}
\def\bm#1\em{\begin{multline}#1\end{multline}}
\theoremstyle{plain}
\newtheorem{theorem}{Theorem}[section]
\newtheorem{prop}[theorem]{Proposition}
\newtheorem{lemma}[theorem]{Lemma}
\theoremstyle{definition}
\newcommand{\nnorm}{\@ifstar\@nnorms\@nnorm}
\newcommand{\@nnorms}[1]{%
  \left|\mkern-1.5mu\left|\mkern-1.5mu\left|
   #1
  \right|\mkern-1.5mu\right|\mkern-1.5mu\right|
}
\newcommand{\@nnorm}[2][]{%
  \mathopen{#1|\mkern-1.5mu#1|\mkern-1.5mu#1|}
  #2
  \mathclose{#1|\mkern-1.5mu#1|\mkern-1.5mu#1|}
}
\newcommand{\qqandqq}{\qquad\text{and}\qquad}
\newcommand{\intrtype}[1]{}
\newcommand{\intrtypen}[1]{#1}
\newcommand{\com}[1]{\ignorespaces}
\newcommand{\is}{\leftarrow}
\title{
  Counterexamples to local Lipschitz and local \\
  H\"older continuity with respect to the initial values \\
  for additive noise driven stochastic differential \\
  equations with smooth drift coefficient functions with \\
  at most polynomially growing derivatives
}
\author{Arnulf Jentzen$^{1,2}$,
  Benno Kuckuck$^{3,4}$,\\
  Thomas M\"uller-Gronbach$^5$, 
  and 
  Larisa Yaroslavtseva$^{6,7}$
  \bigskip
  \\
  \small{$^1$ Department of Mathematics, ETH Zurich, Z\"urich,}\\
  \small{Switzerland, e-mail: arnulf.jentzen@sam.math.ethz.ch}
  \smallskip
  \\
  \small{$^2$ Faculty of Mathematics and Computer Science, University of M\"unster, }\\
  \small{M\"unster, Germany, e-mail: ajentzen@uni-muenster.de}
  \smallskip
  \\
  \small{$^3$ Institute of Mathematics, University of D\"usseldorf, D\"usseldorf,}\\
  \small{Germany, e-mail: kuckuck@math.uni-duesseldorf.de}
  \smallskip\\
  \small{$^4$ Faculty of Mathematics and Computer Science, University of M\"unster, }\\
  \small{M\"unster, Germany, e-mail: bkuckuck@uni-muenster.de}
  \smallskip\\
  \small{$^5$Faculty of Computer Science and Mathematics, 
    University of Passau,}\\
  \small{Passau, Germany, e-mail: thomas.mueller-gronbach@uni-passau.de} 
\smallskip\\
\small{$^6$Faculty of Computer Science and Mathematics, 
University of Passau,}
\\
\small{Passau, Germany, e-mail: larisa.yaroslavtseva@uni-passau.de} 
\smallskip\\
\small{$^7$Faculty of Mathematics and Economics, 
University of Ulm,}
\\
\small{Ulm, Germany, e-mail: larisa.yaroslavtseva@uni-ulm.de} 
}
\begin{document}

\maketitle

\begin{abstract}
  In the recent article [A.~Jentzen, B.~Kuckuck, T.~M\"uller-Gronbach, 
  and L.~Yaroslavtseva, arXiv:1904.05963 (2019)] it has been proved
  that the solutions to every additive noise driven stochastic
  differential equation (SDE) which has a drift coefficient
  function with at most polynomially growing first order partial derivatives
  and which admits a Lyapunov-type condition (ensuring the the existence of
  a unique solution to the SDE) depend in a logarithmically H\"older continuous
  way on their initial values. One might then wonder whether this result
  can be sharpened and whether in fact, SDEs from this
  class necessarily have solutions which depend locally Lipschitz continuously
  on their initial value. The key contribution of this article
  is to establish that this is not the case. More precisely,
  we supply a family of examples of additive noise driven SDEs which
  have smooth drift coefficient functions with at most polynomially
  growing derivatives whose solutions do not depend on their initial
  value in a locally Lipschitz continuous, nor even in a locally H\"older
  continuous way.
\end{abstract}

\tableofcontents

\section{Introduction}

% A key question in stochastic analysis is to study the regularity of
% stochastic differential equations (SDEs) with respect to their
% initial values (cf., e.g., \cite{}).
% On the one hand this issue naturally arises as an important problem
% in stochastic analysis. On the other hand this problem has strong links
% to the analysis of numerical approximations for SDEs (cf., e.g., \cite{}).

The regularity of stochastic differential equations (SDEs) with respect
to their initial values naturally arises as an important problem in stochastic
analysis (cf., e.g., 
  Chen \& Li \cite{CL14}, 
  Cox et al.\ \cite{CoxHutzenthalerJentzen}, 
  Fang et al.\ \cite{FIZ07},
  Hairer et al.\ \cite{HHJ15},
  Hairer \& Mattingly \cite{HairerMattingly},
  Hudde et al.\ \cite{HHM19},
  Krylov \cite{Kry99},
  Li \cite{Li94},
  Li \& Scheutzow \cite{LS11}, 
  Liu \& R\"ockner \cite{LiuRoeckner},
  and Scheutzow \& Schulze \cite{SS17}). 
At the same time this problem has strong links to the analysis
of numerical approximations for SDEs (cf., e.g., 
Hudde et al.\ \cite{HuddeHutzenthalerJentzenMazzonetto},
Hutzenthaler \& Jentzen \cite{HutzenthalerJentzen},
Hutzenthaler et al.\ \cite{hutzenthaler2019strong},
and Zhang \cite{Zhang10}%
).
There are several results in the scientific literature that provide sufficient
or necessary conditions which ensure that SDEs have suitable regularity 
properties in the initial value (cf., e.g., 
\cite{CL14,CoxHutzenthalerJentzen,FIZ07,HHJ15,HairerMattingly,HHM19,Kry99,Li94,LS11,LiuRoeckner,SS17}%
%Welche wiederholen?
).
%
% In particular, %\cite{HairerMGJentzenYaroslavtseva}
%
%
In particular, in the recent article 
\cite{JentzenKuckuckMuellerGronbachYaroslavtseva2019} 
it has been proved that 
every additive noise driven SDE which admits a Lyapunov-type condition 
(that ensures the existence of a unique solution of the SDE) 
and which has a drift coefficient function whose first order 
partial derivatives grow at most polynomially is at least 
logarithmically H\"older continuous in the initial value.
This result shows that the solutions of additive noise driven SDEs which have a smooth
drift coefficient function with at most polynomially growing derivative
cannot have arbitrarily bad regularity properties with respect to the
initial value (cf., e.g., the negative results in Hairer et al.\ \cite{HHJ15}
and \cite{JMGY16}
for SDEs without the restriction on the drift
coefficient function).
However, this result does not imply local Lipschitz continuity
with respect to the initial value.
Having this in mind, one may wonder whether the main result in
\cite{JentzenKuckuckMuellerGronbachYaroslavtseva2019}
is actually sharp or whether, in fact, SDEs from this
class necessarily have solutions which depend locally 
Lipschitz continuously on their initial value.
The key contribution of this article is to establish that 
this is not the case.
More precisely, the main result of this article,
\cref{thm:final} in Subsection~\ref{subsec:final} below,
shows that
there are additive noise driven SDEs which have smooth drift coefficient functions
with at most polynomially growing derivatives whose solutions
do not depend on their initial value in a locally Lipschitz continuous, 
nor even in a locally H\"older continuous way.
In order to illustrate the findings of this article
in more detail, we now present in the following
theorem a simplified version of \cref{thm:final} below.
\begin{theorem}
  \label{thm:intro}
  Let $m\in\N$,
  $d\in\{5,6,\dots\}$,
  %$n\in\{4,5,\dots\}$,
  $T\in(0,\infty)$,
  $\tau\in(0,T)$,
  $v\in\R^d$,
  $\delta\in\R^d\setminus\{0\}$
  let $\norm{\cdot}\colon\R^d\to[0,\infty)$ be the standard norm on $\R^d$,
  let $\nnorm{\cdot}\colon\R^m\to[0,\infty)$ be a norm,
  let $(\Omega,\mc F,\PP)$ be a probability space,
  and let $W\colon[0,T]\times\Omega\to\R^m$ be a standard Brownian motion
    %on $(\Omega,\mc F,\PP)$ 
    with continuous sample paths.
  Then there exist
    $\mu\in C^\infty(\R^d,\R^d)$,
    $\sigma\in\R^{d\times m}$,
    $V\in C^\infty(\R^d,[0,\infty))$,
    $\kappa\in(0,\infty)$
  such that
  \begin{enumerate}[label=(\roman{enumi}),ref=(\roman{enumi})]
    \item \label{it:intro:1}
    it holds for all
      $x,h\in\R^d$, 
      $z\in\R^m$ 
    that
      $\norm{\mu'(x)h}\leq \kappa \bp{1+\norm x^\kappa} \norm h$,
      $V'(x)\mu(x+\sigma z)\leq \kappa(1+\nnorm{z})V(x)$,
      and $\norm x\leq V(x)$,
    \item \label{it:intro:2}
      there exist unique stochastic processes
      $X^x\colon [0,T]\times\Omega\to\R^d$, $x\in\R^d$, with continuous sample paths
      such that for all 
        $x\in\R^d$,
        $t\in[0,T]$,
        $\omega\in\Omega$
      it holds that
      \beq
        X^x(t,\omega)=x+\int_0^t\mu(X^x(s,\omega))\,\diff s+\sigma W(t,\omega)
        ,
      \eeq
    \item \label{it:intro:3}
      it holds for all 
        $R,r\in(0,\infty)$
      that
      \beq
        \sup_{x\in[-R,R]^d}\,\sup_{t\in[0,T]} \bEE{ \norm{X^x(t)}^{r} }<\infty
        ,
      \eeq
    \item \label{it:intro:4}
      it holds for all 
        $R,q\in(0,\infty)$
      that there exists 
        $c\in(0,\infty)$ 
      such that for all 
        $x,y\in[-R,R]^d$ with $0<\norm{x-y}\neq 1$ 
      it holds that
      \beq
        \sup_{t\in[0,T]}\bEE{\norm{X^x(t)-X^y(t)}}\leq c\,\abs{\ln(\norm{x-y})}^{-q}
        ,
      \eeq
      and
    \item \label{it:intro:5}
      it holds for all 
        $t\in(\tau,T)$,
        $\alpha\in(0,\infty)$
      that there exists 
        $c\in(0,\infty)$ 
      such that for all
        $w\in\{v+r\delta\colon r\in[0,1]\}$
      it holds that
      \beq
        c\,\norm{v-w}^\alpha
        \leq
        \bEE{ \norm{X^v(t)-X^w(t)} } 
        .
      \eeq
  \end{enumerate}
\end{theorem}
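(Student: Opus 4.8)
The plan is to exhibit, after the routine reductions, a single explicit quadruple $(\mu,\sigma,V,\kappa)$ whose construction is engineered so that the expected one-step difference, as a function of $\norm{v-w}$, decays at a rate strictly between every power $\norm{v-w}^{\alpha}$ and every inverse power $\abs{\ln\norm{v-w}}^{-q}$; the prototype to keep in mind is $\exp\!\bigl(-c(\ln(1/\norm{v-w}))^{\theta}\bigr)$ with $\theta=2/\kappa\in(0,1)$, which lies below every $\abs{\ln\norm{v-w}}^{-q}$ (hence is compatible with the logarithmic H\"older upper bound) and above every $\norm{v-w}^{\alpha}$ (hence yields \ref{it:intro:5}). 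First I would perform the reductions: by translating, rotating and rescaling the state space and the time interval one may assume $v$, $\delta$, $\tau$, $T$ and the two norms are normalized (in particular $\delta$ aligned with a distinguished coordinate axis), and — since $d\ge5$ — one may let $\mu$ depend only on five distinguished coordinates and vanish, together with the relevant part of $\sigma$, in the remaining $d-5$ directions, so that it suffices to build the example on $\R^{5}$.

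Second, I would design $\mu$ on $\R^{5}$ with five coordinates carrying the following jobs: a \emph{clock} coordinate with a bounded, essentially constant drift, turning the autonomous system into a time-inhomogeneous one so that the remaining mechanisms are switched on during a prescribed window inside $(0,\tau)$; a \emph{scale} coordinate with a \emph{mild} dissipative drift (pointing back towards the origin, of at most linear size) so that, driven by the additive noise, it performs — only with probability comparable to $\exp(-cN^{2})$ — an excursion out to level $N$ during that window; a pair of \emph{amplifier} coordinates carrying a genuinely expanding direction whose exponential rate is $\asymp(\text{value of the scale coordinate})^{\kappa}$, so that a trip of the scale coordinate out to level $N$ multiplies the relevant small separation by $\asymp\exp(c\tau N^{\kappa})$ before it saturates; and a \emph{gate} coordinate, which together with a strong smooth confinement keeps all coordinates except the clock and the scale inside a fixed bounded box, so that $\norm x$ is governed by the clock and scale coordinates and an amplified separation saturates at size $\asymp1$ and is thereafter \emph{frozen} (the drift being flat in the relevant direction once the excursion is over), which is what transports the separation unchanged to every later time $t\in(\tau,T)$. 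Checking $\mu\in C^{\infty}(\R^{5},\R^{5})$ and $\norm{\mu'(x)h}\le\kappa(1+\norm x^{\kappa})\norm h$ is then bookkeeping, since the only large derivative is the amplifier rate $\asymp(\text{scale})^{\kappa}\le\norm x^{\kappa}$.

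Third, I would take $V(x)=\bigl(c_{0}+x_{\mathrm{clock}}^{2}+x_{\mathrm{scale}}^{2}\bigr)^{1/2}$, i.e.\ \emph{blind} to the confined amplifier and gate coordinates: then $\norm x\le V(x)$ holds because those coordinates are bounded, in $V'(x)\mu(x+\sigma z)$ the contribution of the (possibly huge) amplifier drift drops out entirely, and the scale coordinate's drift is dissipative enough that the genuinely $\nnorm{z}$-dependent terms carry the right sign for large $\nnorm{z}$, so that $V'(x)\mu(x+\sigma z)\le\kappa(1+\nnorm{z})V(x)$ follows by a direct estimate. With \ref{it:intro:1} in hand, \ref{it:intro:2} (unique global solutions), \ref{it:intro:3} (moment bounds) and \ref{it:intro:4} (logarithmic H\"older continuity) follow from the results of \cite{JentzenKuckuckMuellerGronbachYaroslavtseva2019} (alternatively \ref{it:intro:2}--\ref{it:intro:3} follow from the pathwise ODE reformulation $t\mapsto X^{x}(t)-\sigma W(t)$ together with the Lyapunov estimate).

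Finally, for \ref{it:intro:5} I would fix $t\in(\tau,T)$ and $w=v+r\delta$ with $r\in(0,1]$, set $N=N(r):=\bigl(\tfrac{1}{c\tau}\ln(1/r)\bigr)^{1/\kappa}$, and consider the event $G_{r}$ that the scale coordinate performs the excursion to level $N(r)$ during the designated window (with the confined coordinates behaving normally, an event of probability bounded below uniformly in $r$). On $G_{r}$ the difference $\Delta(s):=X^{v}(s)-X^{w}(s)$ solves the \emph{noise-free} equation $\Delta'(s)=\mu(X^{v}(s))-\mu(X^{w}(s))$ (the additive noise cancels), dominated from below by the expanding amplifier direction acting for time $\asymp\tau$ at rate $\asymp N(r)^{\kappa}$, so $\norm{\Delta(\tau)}\gtrsim\min\{1,\,r\exp(c\tau N(r)^{\kappa})\}\asymp1$, and the freezing keeps this up to time $t$. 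A Girsanov/Brownian small-ball estimate gives $\PP(G_{r})\gtrsim\exp(-c'N(r)^{2})=\exp\!\bigl(-c''(\ln(1/r))^{2/\kappa}\bigr)$, whence $\EE[\norm{X^{v}(t)-X^{w}(t)}]\ge c\,\PP(G_{r})\ge\exp\!\bigl(-c''(\ln(1/r))^{2/\kappa}\bigr)\ge c_{\alpha}r^{\alpha}$ for every $\alpha\in(0,\infty)$, provided $\kappa>2$ (so that $2/\kappa<1$). The main obstacle, where essentially all the work lies, is the \emph{simultaneous} fulfilment of \ref{it:intro:1} and this lower bound: one must keep the amplifier rate as large as $\norm x^{\kappa}$ (so the amplification beats every polynomial) while keeping $\mu$ compatible with a Lyapunov function of linear-type growth (for non-explosion), and one must show that the amplification really propagates through the common noise, i.e.\ that on $G_{r}$ the trajectory $X^{v}$ genuinely possesses an expanding direction of the required strength for the required duration — which is exactly what the careful design of the scale/amplifier interaction (and of the linear regime of the amplifier) is for.
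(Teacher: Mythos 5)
Your high-level strategy is the same as the paper's: a five-coordinate system with a clock, a Gaussian ``scale'' coordinate, and an amplifier pair whose exponential rate is a power $n$ of the scale, so that a small initial separation $r$ in the amplifier direction gets multiplied by $\exp\bigl(c\,(\text{scale})^{n}\bigr)$, and restricting to the rare Gaussian event $\{\text{scale}\approx(\ln(1/r))^{1/n}\}$ (probability $\approx\exp(-c'(\ln(1/r))^{2/n})$) trades a factor of size $\approx 1$ against a subpolynomial probability, beating every $r^\alpha$ once $2/n<1$. This is exactly the mechanism behind \cref{lem:stricthelp} and \cref{prop:failhoelder}.

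However, your \emph{realization} of this mechanism diverges from the paper's at three points, and each divergence creates a gap that the paper's construction is specifically engineered to sidestep. First, you propose a ``gate'' coordinate plus a ``strong smooth confinement'' to keep the amplifier coordinates bounded, and a Lyapunov function \emph{blind} to those coordinates. This is where all the danger is: to confine a coordinate driven at rate $\approx N^{n}$ you need a restoring force that beats $N^{n}$ uniformly in the unbounded parameter $N$, and it is not at all clear this can be done with a $C^{\infty}$ drift whose derivative still obeys $\norm{\mu'(x)h}\leq\kappa(1+\norm x^\kappa)\norm h$. The paper does not confine at all: the amplifier coordinate $x_4$ in \eqref{eq:mudef} can become large, and the key observation is the algebraic cancellation $x_4\cdot\bigl(f(x_1)x_4x_5\bigr)+x_5\cdot\bigl(-f(x_1)(x_4)^2\bigr)=0$ inside $V'(x)\mu(x+\sigma z)$, which together with a Lyapunov function $V$ that \emph{does} contain $(x_4)^2+(x_5)^2$ (see \eqref{eq:exist.defV}) makes the Lyapunov estimate go through with no confinement and no gate coordinate. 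Second, you flag the need to show that ``the amplification really propagates through the common noise'' and propose Girsanov and Brownian small-ball estimates, but you do not resolve it. The paper avoids this entirely: the noise only enters the second coordinate $X_2=W$, and the scale coordinate is the deterministic functional $X_3(\tau)=\int_0^\tau g'(s)W(s)\,\diff s$, which by \cref{lem:stdnorm} is an exact standard normal; given $X_3(\tau)$, the amplifier dynamics on $[\tau,T]$ are a deterministic ODE solved in closed form (\eqref{eq:X4.2}), so nothing needs to ``propagate'' and no change of measure is needed. Third, you rely on a qualitative ``event of probability bounded below uniformly in $r$'' to handle the joint behaviour of scale and amplifier; the paper instead packages the entire computation into the explicit Gaussian integral estimate of \cref{lem:stricthelp}, with the saturation you want hard-wired into the exact formula $X_4^{\eps v}(t)=\eps\exp\bigl(\kappa_t[X_3(\tau)]^{n}-\eps^2\kappa_t\exp(2\kappa_t[X_3(\tau)]^{n})\bigr)$ (see \eqref{eq:bndX4}), where the second exponent term, not a confining drift, provides the saturation. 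So: the strategy is right, but the confinement device and the Girsanov/propagation step are genuine gaps that the paper replaces with a cleverer drift (with built-in cancellation) and an exactly solvable amplifier (driven by a single Gaussian, not by the Brownian path).
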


\Cref{thm:intro} above is an immediate consequence of \cref{thm:final} below,
the main result of this article. \Cref{thm:final} in turn is proved by explicitly
constructing a specific example of a family of SDEs with the desired properties 
(cf., e.g., \eqref{eq:failhoelderconc}, 
\eqref{eq:exist0.defmu}, 
\eqref{eq:lemexistconc},
\eqref{eq:lem1nonhoelder},
and \eqref{eq:finalnonhoelder})
Observe that \cref{thm:intro} establishes the existence of an additive noise
driven SDE with a smooth drift coefficient function 
$\mu\colon \R^d\to\R^d$ and a diffusion coefficient $\sigma\in\R^{d\times m}$
such that the drift coefficient function has at most polynomially growing derivatives
and admits a suitable Lyapunov-type condition (see \cref{it:intro:1} above), 
such that there exist unique solution processes $X^x\colon [0,T]\times \Omega\to\R^d$,
$x\in\R^d$,
to this SDE (see \cref{it:intro:2} above),
such that for every $x\in\R^d$, $t\in[0,T]$ the absolute moments
$\bEE{\norm{X^x(t)}^r}$, $r\in(0,\infty)$, of the solution processes are 
finite (see \cref{it:intro:3} above),
and such that the solution is regular with respect to the initial value in 
the sense of \cref{it:intro:4} above (cf.\ \cite[Theorem~8.4]{JentzenKuckuckMuellerGronbachYaroslavtseva2019}), yet fails
to be locally Lipschitz or locally H\"older continuous
in the initial values (see \cref{it:intro:5} above).

The remainder of this article is organized as follows:
In \cref{sec:axis-aligned} we establish, roughly speaking, 
the existence of additive noise driven SDEs whose solutions depend non-locally H\"older 
continuously on their initial values.
In \cref{sec:axis-aligned-existence} we establish, roughly speaking, 
the existence of solutions to certain additive noise driven SDEs whose 
solutions depend 
non-locally H\"older continuously on their initial values and whose
drift coefficient functions are smooth with at most polynomially 
growing derivatives.
In \cref{sec:general} we combine the results of
\cref{sec:axis-aligned-existence} with an
essentially well-known fact
on affine linear transformations of solutions to SDEs
in order to prove \cref{thm:final}, the main result of this article.

% \section{Remarks}

% \begin{rem}
%   \framebox{TBD:} Not every SDE with additive noise and $\mu$ satisfying\T{TBD}
%   \[
%     \exists\,\kappa\in(0,\infty)\colon \forall\,x,h\in\R^d\colon \norm{\mu'(x)h}\leq \kappa(1+\norm x^\kappa)
%   \]
%   that has a unique strong solution has a supersolution, 
%   see  Cox $\&$ Hutzenthaler $\&$ Jentzen.
% \end{rem}
  
% \begin{rem}
%   Note that for all $\alpha\in(0,1)$, $\gamma\in(0,1]$ and $c>0$,
  
%   \[
%   \lim_{z\to\infty}z^\gamma \cdot e^{-c\cdot \ln^\alpha(z)} = \lim_{z\to\infty} e^{-c\cdot \ln^\alpha(z) + \gamma\cdot \ln(z)} = \infty.
%   \]
%   Hence, the lower bound in \eqref{Thm1estim}  converges to zero slower than each of the functions $\R^5\ni h\mapsto \|h\|^\gamma\in\R$, $\gamma\in (0,1]$. In particular, for every $t\in(\tau, T]$ and every $p\in[1, \infty)$ the mappings $\R^5\ni x\mapsto X^x(t)\in L^p(\Omega, \R^5)$ and  $\R^5\ni x\mapsto \EE [X^x(t)]\in\R^5$ are not locally $\gamma$-H\"older continuous, for no $\gamma\in(0,1]$.
%   \end{rem}
  
%   \begin{rem}\framebox{Discuss how close the upper and the lower bounds are.}
%   \end{rem}

\section{On the existence of axis-aligned stochastic differential equations (SDEs) with non-locally H\"older continuous
dependence on the initial values}
\label{sec:axis-aligned}

In this section we establish in \cref{prop:failhoelder} below, roughly speaking, 
the existence of additive noise driven  SDEs whose solutions depend non-locally H\"older continuously on their initial values.
In our proof of \cref{prop:failhoelder} we employ the
elementary lower bound for certain functionals of standard normal random
variables in \cref{lem:stricthelp} below as well as the essentially well-known result on
certain Lebesgue integrals involving standard Brownian motions in \cref{lem:stdnorm} below.
For completeness we also provide in this section a detailed proof for
\cref{lem:stdnorm}. 
Our proof is elementary and avoids 
stochastic integration theory and the use of It\^o's formula.
Alternatively, \cref{lem:stdnorm} can be proved through a
straightforward application of It\^o's lemma.

\subsection{Lower bounds for certain functionals of standard normal random variables}

\begingroup
\newcommand{\vp}{p}
\begin{lemma}
  \label{lem:stricthelp}
  Let $p\in[1,\infty)$,
  $\kappa,c\in(0,\infty)$,
  $\eps\in(0,\nicefrac 1e]$
  satisfy
  $
    c=p\kappa^{-2/p}+([2\pi]^{1/2}p+1)\kappa+1
  $,
  let $(\Omega,\mc F,\PP)$ be a probability space,
  and let $Z\colon \Omega\to\R$ be a standard normal random variable.
  Then
  \beq
  \label{eq:stricthelpconc}
    \bEE{\eps\exp\bp{\kappa\,\abs{Z}^p-\eps^2\kappa\exp(2\kappa\,\abs{Z}^p)}}
    \geq 
    \exp\bp{ -c\,\abs{\ln(\eps)}^{2/p} }.
  \eeq
\end{lemma}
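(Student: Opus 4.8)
The plan is to bound the expectation from below by restricting it to one carefully chosen event on which the integrand is uniformly bounded from below by a constant depending only on $\kappa$. Throughout write $L=\abs{\ln(\eps)}=-\ln(\eps)$, so that the hypothesis $\eps\in(0,\nicefrac1e]$ gives $L\geq1$ and $\eps=e^{-L}$. Regarded as a function of $t=\abs{Z}^p\in[0,\infty)$, the integrand $\eps\exp(\kappa\,t-\eps^2\kappa\exp(2\kappa\,t))$ is of the order of its maximum when $t\approx L/\kappa$ (there $\eps^2\kappa\exp(2\kappa\,t)=\kappa$ and $\eps\exp(\kappa\,t)=1$), so the idea is to localize $\abs{Z}^p$ to a short interval lying just to the left of $L/\kappa$: on such an interval the factor $e^{\kappa\abs{Z}^p}$ cancels the prefactor $\eps$ up to a constant, and what remains is to compare the (polynomially small in $L$) probability of the interval with $\exp(-c\abs{\ln(\eps)}^{2/p})$.

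Concretely, I would set $b=(L/\kappa)^{1/p}$, $\eta=\tfrac1{p\kappa}b^{1-p}=\tfrac b{pL}$ (so that $p\kappa b^{p-1}\eta=1$), and $a=b-\eta$; since $pL\geq1$ one has $a\geq0$, hence $[a,b]\subseteq[0,\infty)$. On $\{a\leq Z\leq b\}$ one has $\abs{Z}^p=Z^p\in[(b-\eta)^p,b^p]$, and convexity of $t\mapsto t^p$ on $[0,\infty)$ (valid because $p\geq1$) yields $(b-\eta)^p\geq b^p-pb^{p-1}\eta$, whence $\kappa\abs{Z}^p\geq\kappa b^p-p\kappa b^{p-1}\eta=L-1$; moreover $\kappa\abs{Z}^p\leq\kappa b^p=L$, so $\eps^2\kappa\exp(2\kappa\abs{Z}^p)\leq\eps^2\kappa e^{2L}=\kappa$. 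Hence the integrand is at least $\eps e^{L-1}e^{-\kappa}=e^{-1-\kappa}$ on $\{a\leq Z\leq b\}$. Combined with the elementary bound $\PP(a\leq Z\leq b)\geq(b-a)\tfrac1{\sqrt{2\pi}}e^{-b^2/2}=\tfrac\eta{\sqrt{2\pi}}e^{-b^2/2}$ (monotonicity of the standard normal density on $[0,\infty)$) and the nonnegativity of the integrand, this gives
\[
  \bEE{\eps\exp\bp{\kappa\,\abs{Z}^p-\eps^2\kappa\exp(2\kappa\,\abs{Z}^p)}}
  \;\geq\;\frac{\eta}{\sqrt{2\pi}}\,e^{-b^2/2-1-\kappa}
  \;=\;\frac{b}{\sqrt{2\pi}\,pL}\exp\bp{-\tfrac12\kappa^{-2/p}L^{2/p}-1-\kappa} .
\]

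It then remains to show that the last expression is at least $\exp(-cL^{2/p})$. Taking logarithms and using $b=(L/\kappa)^{1/p}$, this is equivalent to
\[
  \bp{c-\tfrac12\kappa^{-2/p}}L^{2/p}
  \;\geq\;\ln\bp{\tfrac{\sqrt{2\pi}\,pL}{b}}+1+\kappa
  \;=\;\ln p+\tfrac{p-1}{p}\ln L+\tfrac1p\ln\kappa+\tfrac12\ln(2\pi)+1+\kappa .
\]
Since $c-\tfrac12\kappa^{-2/p}=(p-\tfrac12)\kappa^{-2/p}+(\sqrt{2\pi}\,p+1)\kappa+1$, and since $\tfrac{p-1}{p}\ln L\leq\ln L\leq\tfrac p2\bp{L^{2/p}-1}$ (the second inequality from $e^x\geq1+x$ with $x=\tfrac2p\ln L$), I would move the term $\tfrac p2 L^{2/p}$ to the left-hand side (whose new coefficient of $L^{2/p}$, namely $(p-\tfrac12)\kappa^{-2/p}+(\sqrt{2\pi}\,p+1)\kappa+1-\tfrac p2$, is nonnegative, as one checks by distinguishing $\kappa\leq1$ from $\kappa>1$), cancel the common $+1$ and $+\kappa$, and use $L^{2/p}\geq1$. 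This reduces the claim to the $L$-free inequality
\[
  (p-\tfrac12)\kappa^{-2/p}+\sqrt{2\pi}\,p\,\kappa\;\geq\;\ln p+\tfrac1p\ln\kappa+\tfrac12\ln(2\pi) ,
\]
which holds for all $p\in[1,\infty)$ and $\kappa\in(0,\infty)$: for $\kappa\geq1$ the term $\sqrt{2\pi}\,p\kappa$ alone dominates the right-hand side (use $\ln p\leq p-1$ and $\tfrac1p\ln\kappa\leq\kappa-1$), while for $\kappa\leq1$ one has $\kappa^{-2/p}\geq1$ and $\tfrac1p\ln\kappa\leq0$, and a short further case distinction (on whether $\kappa$ is so small that $\kappa^{-2/p}$ is large, or bounded away from $0$ so that $\sqrt{2\pi}\,p\kappa$ is of order $p$) finishes the estimate.

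I expect the only genuine obstacle to be the bookkeeping in this last step, i.e.\ confirming that the specific value of $c$ fixed in the hypothesis is large enough; conceptually everything is forced once one decides to localize $\abs{Z}^p$ near $L/\kappa$. The delicate point is that $L^{2/p}\to1$ as $p\to\infty$ while the correction terms $\ln p$, $\ln L$, $\tfrac1p\ln\kappa$ do not, and that $\kappa^{-2/p}$ and $\kappa$ trade roles as $\kappa$ crosses $1$, which is what forces the small case split. Note also that the hypothesis $p\geq1$ enters exactly through the convexity of $t\mapsto t^p$, and that the hypothesis $\eps\leq\nicefrac1e$ (i.e.\ $L\geq1$, hence $pL\geq1$ and $L^{2/p}\geq1$) is used both to guarantee $a\geq0$ and in the final estimate.
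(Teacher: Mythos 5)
Your plan is exactly the one the paper uses: localize $\abs{Z}$ to a short interval $[a,b]\subseteq[0,\infty)$ just below $(L/\kappa)^{1/p}$ where $L=\abs{\ln\eps}$, observe that there $\eps\exp(\kappa\abs{Z}^p)\in[e^{-1},1]$ and $\eps^2\kappa\exp(2\kappa\abs{Z}^p)\leq\kappa$ so the integrand is at least $e^{-1-\kappa}$, and bound the probability of the interval by its length times $(2\pi)^{-1/2}e^{-b^2/2}$. The paper's $a$ satisfies $\kappa a^p=L-1$ exactly, and your $\eta$ is precisely the paper's lower bound on $b-a$, so both proofs reach the same intermediate bound $(2\pi)^{-1/2}(p\kappa)^{-1}b^{1-p}e^{-b^2/2-1-\kappa}$. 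The only genuine divergence is the final comparison with $e^{-cL^{2/p}}$. The paper absorbs the algebraic prefactor into the exponent via two one-liners, $b^{1-p}\geq e^{-(p-1)b^2/2}$ (from $\ln(z^2)\leq z^2$) and $(p\kappa)^{-1}\geq e^{-\sqrt{2\pi}p\kappa}$ (from $e^{-x}\leq 1/x$); the exponent then becomes $-\sqrt{2\pi}p\kappa-1-\kappa-\tfrac p2\kappa^{-2/p}L^{2/p}$, and the claim drops out from $L^{2/p}\geq1$ and $\tfrac p2\leq p$, with $c$ reading off exactly. You instead carry the polynomial prefactor through the logarithm, absorb the stray $\tfrac{p-1}p\ln L$ via $\ln L\leq\tfrac p2(L^{2/p}-1)$, and reduce to the $L$-free inequality $(p-\tfrac12)\kappa^{-2/p}+\sqrt{2\pi}p\kappa\geq\ln p+\tfrac1p\ln\kappa+\tfrac12\ln(2\pi)$, which you only sketch. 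This inequality does hold for all $p\geq1$, $\kappa>0$, but it is not a one-liner: in the regime $2^{-p/2}<\kappa<1$ one must combine both left-hand terms nontrivially (for $p$ close to $1$ the first term $(p-\tfrac12)\kappa^{-2/p}$ alone falls short of $\tfrac12\ln(2\pi)\approx0.92$, and the description ``bounded away from $0$'' does not account for the $p$-dependence of the threshold $2^{-p/2}$). So there is a real gap precisely at the step you flag as ``bookkeeping''; the paper's two substitutions are not cosmetic but the device that makes this step vanish and that explains transparently where the particular value of $c$ in the hypothesis comes from.
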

\begin{proof}[Proof of \cref{lem:stricthelp}]
  Throughout this proof 
  let $\psi\colon\R\to(0,\infty)$ 
    \intrtype{be the function which satisfies }%
    \intrtypen{satisfy }%
    for all 
      $z\in\R$
    that
    \beq
    \label{eq:stricthelp.defpsi}
      \psi(z)=\exp\bp{ z-\kappa\exp(2z) }
    \eeq
  and let $a,b\in[0,\infty)$
    \intrtype{be the real numbers which satisfy }%
    \intrtypen{satisfy }%
  \beq
  \label{eq:abdef}
    \textstyle
    a=\bigl[\kappa^{-1}(\ln(\nicefrac1\eps)-1)\bigr]^{1/p}
    \qqandqq
    b=\bigl[\kappa^{-1}\ln(\nicefrac1\eps)\bigr]^{1/p}
    .
  \eeq
  Note that 
    \eqref{eq:stricthelp.defpsi}
  ensures that for all 
    $z\in\R$ 
  it holds that
  \ba
    \psi\bp{\kappa\, \abs{z}^p+\ln(\eps)}
    &=\exp\bp{ \kappa\, \abs{z}^p+\ln(\eps)-\kappa\exp\bp{2\kappa\,\abs{z}^p+2\ln(\eps)} }\\
    &=\eps\exp\bp{ \kappa\, \abs{z}^p-\eps^2\kappa\exp\bp{2\kappa\,\abs{z}^p} }
    .
  \ea
  Combining
    this
  with
    the hypothesis that 
      $Z$ is a standard normal random variable,
    the fact that 
      $0\leq a<b$,
    and the fact that 
      $[0,\infty)\ni z\mapsto \exp(-z^2/2)\in(0,\infty)$ is a decreasing function
  shows that
  \ba
  \label{eq:bnddexp}
    \bEE{\eps\exp\bp{\kappa\,\abs{Z}^p-\eps^2\kappa\exp(2\kappa\, \abs{Z}^p)}}
    &=
    \bEE{\psi(\kappa\,\abs{Z}^p+\ln(\eps))}
    \\&\geq 
    \frac1{\sqrt{2\pi}}\int_{a}^{b}\psi(\kappa\, \abs{z}^p+\ln(\eps))\exp\bbbp{\frac{-z^2}2}\,\diff z
    \\&\geq 
    \frac{\exp\bp{\frac{-b^2}2}}{\sqrt{2\pi}}\int_{a}^{b}\psi(\kappa\,\abs{z}^p+\ln(\eps))\,\diff z
    \\&\geq 
    \frac{\exp\bp{\frac{-b^2}2}(b-a)}{\sqrt{2\pi}}\bbbbr{ \inf_{z\in[a,b]}\psi(\kappa\,\abs{z}^p+\ln(\eps)) }
    .
  \ea
  Next observe that 
    the fact that
      $\{\kappa\,\abs{z}^p+\ln(\eps)\colon z\in [a,b]\}=[-1,0]$
%   Next observe that 
%     the fact that $[a,b]\ni z\mapsto \kappa\,\abs{z}^n+\ln(\eps)\in\R$
%     is an increasing function
  implies that
  \beq
  \label{eq:infpsi}
    \inf_{z\in[a,b]}\psi(\kappa\,\abs{z}^p+\ln(\eps))
    =
    \inf_{y\in[-1,0]}\psi(y)
    =
    \exp\bbbp{\inf_{y\in[-1,0]} (y-\kappa\exp(2y))}
    \geq
    \exp(-1-\kappa)
    .
  \eeq
  In the next step we note that
    the fact that
      for all
        $z\in(0,\infty)$
      it holds that
        $\ln(z)\leq z$
  proves that for all
    $z\in(0,\infty)$
  it holds that
  \beq
    \exp\bbbp{-\frac{(p-1)z^2}2}
    \leq 
    \exp\bbbp{-\frac{(p-1)\ln(z^2)}2}
    =
    (z^2)^{-\frac{p-1}2}
    =
    z^{-(p-1)}
    .
  \eeq
    The fundamental theorem of calculus,
    the fact that $0\leq a^p<b^p$,
    and the fact that
      $(0,\infty)\ni z\mapsto z^{-(p-1)/p}\in\R$ is a decreasing function
    therefore
  ensure that
  \ba
    b-a
    &=
    \bbr{z^{\frac1p}}_{z=a^p}^{z=b^p}
    =
    p^{-1}\bbbbr{\int_{a^p}^{b^p}z^{-\frac{p-1}p}\,\diff z}
    \geq 
    p^{-1}(b^p-a^p)\bbbbr{\inf_{z\in[a^p,b^p]}z^{-\frac{p-1}p}}\\
    &= 
    p^{-1}\kappa^{-1}b^{-(p-1)}
    \geq 
    (p\kappa)^{-1}\exp\bbbp{-\frac{(p-1)b^2}{2}}
    .
  \ea
    This
    and the fact that for all 
      $x\in(0,\infty)$ 
    it holds that 
      $\exp (-x)\leq \frac1x$ 
  show that
  \ba
    \frac{\exp\bp{\frac{-b^2}2}(b-a)}{\sqrt{2\pi}}
    &\geq
    \bbr{(2\pi)^{1/2}p\kappa}^{-1}\exp\bbbp{-\frac{pb^2}2}
    \\&=
    \bbr{(2\pi)^{1/2}p\kappa}^{-1}\exp\bbbp{-\frac{p\bigl[\kappa^{-1}\ln(\nicefrac1\eps)\bigr]^{2/p}}{2}}
    \\&\geq
    \bbr{(2\pi)^{1/2}p\kappa}^{-1}\exp\bp{-p\kappa^{-2/p}[\ln(\nicefrac1\eps)]^{2/p}}
    \\&\geq
    \exp\bp{-(2\pi)^{1/2}p\kappa}\exp\bp{-p\kappa^{-2/p}[\ln(\nicefrac1\eps)]^{2/p}}
    \\&=
    \exp\bp{-(2\pi)^{1/2}p\kappa-p\kappa^{-2/p}[\ln(\nicefrac1\eps)]^{2/p}}
    .
  \ea
  Combining 
    this 
  with 
    \eqref{eq:bnddexp} 
    and \eqref{eq:infpsi} 
  demonstrates that
  \ba
  \label{eq:finbnd1}
    &\bEE{\eps\exp\bp{\kappa\,\abs{Z}^p-\eps^2\kappa\exp(2\kappa\,\abs{Z}^p)}}
    \\&\geq 
    \exp\bp{-[2\pi]^{1/2}p\kappa-p\kappa^{-2/p}[\ln(\nicefrac1\eps)]^{2/p}}
    \exp(-1-\kappa)
    \\&=
    \exp\bp{-\bp{[2\pi]^{1/2}p\kappa+1+\kappa+p\kappa^{-2/p}[\ln(\nicefrac1\eps)]^{2/p}}}
    .
  \ea
  % Note that 
  %   the hypothesis that $\eps\in(0,\nicefrac1e)$
  % implies that
  %   $[\ln(\nicefrac1\eps)]^{\nicefrac 2n}>1$.
  % %
  %   This
  %   and \eqref{eq:finbnd1}
  % show that
  The fact that
    $\ln(\nicefrac1\eps)^{2/p}>1$
    hence
  implies that
  \beq
    \bEE{\eps\exp\bp{\kappa\,\abs{Z}^p-\eps^2\kappa\exp(2\kappa\,\abs{Z}^p)}}
    \geq
    \exp\bp{-\bp{[2\pi]^{1/2}p\kappa+1+\kappa+p\kappa^{-2/p}}[\ln(\nicefrac1\eps)]^{2/p}}
    .
  \eeq
  The proof of \cref{lem:stricthelp} is thus completed.
\end{proof}
\endgroup

\subsection{On the distribution of certain integrals involving Brownian motions}

\begin{lemma}
\label{lem:stdnorm}
  Let $\tau\in(0,\infty)$,
  let $(\Omega,\mc F,\PP)$ be a probability space,
  let $g\in C^1(\R,[0,\infty))$ satisfy 
    $\{t\in\R\colon g(t)>0\}\subseteq[0,\tau]$
    and $\int_0^\tau \abs{g(t)}^2\,\diff t=1$,
  let $W\colon[0,\tau]\times \Omega\to\R$ be a standard Brownian motion with continuous sample paths,
  and let $X\colon\Omega\to\R$ be a random variable
    which satisfies for all
      $\omega\in\Omega$
    that
    \beq
    \label{eq:XintW}
      X(\omega)
      =
      \int_0^\tau g'(s)W(s,\omega)\,\diff s
      .
    \eeq
  Then $X$ is a standard normal random variable.
\end{lemma}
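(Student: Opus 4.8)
The plan is to prove the slightly stronger statement that $X$ is a centred Gaussian random variable with $\bEE{X^2}=1$; this immediately identifies the distribution of $X$ as the standard normal distribution. Before doing so I would record two elementary facts about $g$. First, since $g$ takes values in $[0,\infty)$ and $\{t\in\R\colon g(t)>0\}\subseteq[0,\tau]$, it follows that $g(t)=0$ for every $t\in\R$ with $t\notin[0,\tau]$, and the continuity of $g$ therefore yields $g(0)=g(\tau)=0$. Second, $g'$ is continuous (as $g\in C^1(\R,[0,\infty))$) and hence bounded on the compact interval $[0,\tau]$, and the fundamental theorem of calculus gives $\int_r^\tau g'(s)\,\diff s=g(\tau)-g(r)=-g(r)$ for all $r\in[0,\tau]$.

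Next I would compute the first two moments of $X$. Since $g'$ is bounded on $[0,\tau]$ and $\bEE{\abs{W(s)}^2}=s\leq\tau$ for $s\in[0,\tau]$, Tonelli's theorem permits interchanging the expectation with the (iterated) Lebesgue integral, so that $\bEE{X}=\int_0^\tau g'(s)\,\bEE{W(s)}\,\diff s=0$ and, using $\bEE{W(s)W(t)}=\min\{s,t\}$,
\beq
  \bEE{X^2}
  =\int_0^\tau\!\int_0^\tau g'(s)\,g'(t)\,\min\{s,t\}\,\diff s\,\diff t
  .
\eeq
Writing $\min\{s,t\}=\int_0^\tau\1_{[0,s]}(r)\,\1_{[0,t]}(r)\,\diff r$ for $s,t\in[0,\tau]$ and applying Fubini's theorem once more — the corresponding triple integral of absolute values is bounded by $\tau\,\bp{\int_0^\tau\abs{g'(s)}\,\diff s}^2<\infty$ — then yields
\beq
  \bEE{X^2}
  =\int_0^\tau\bbbp{\int_r^\tau g'(s)\,\diff s}^2\diff r
  =\int_0^\tau\abs{g(r)}^2\,\diff r
  =1
  .
\eeq

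It remains to show that $X$ is Gaussian. For this I would approximate the pathwise Riemann integral in \eqref{eq:XintW} by its Riemann sums: for $n\in\N$ set $X_n=\tfrac\tau n\sum_{k=1}^{n}g'\bp{\tfrac{k\tau}n}\,W\bp{\tfrac{k\tau}n}$. Each $X_n$ is a deterministic linear combination of the jointly normal random variables $W\bp{\tfrac\tau n},W\bp{\tfrac{2\tau}n},\dots,W(\tau)$ and is therefore a centred Gaussian random variable. Because $g'$ is continuous and the sample paths of $W$ are continuous, $X_n(\omega)\to X(\omega)$ for every $\omega\in\Omega$; moreover an estimate of the type $\bEE{\abs{X_n-X}^2}\leq\tau\int_0^\tau\bEE{\abs{\phi_n(s)-g'(s)W(s)}^2}\,\diff s$ (where $\phi_n$ denotes the step-function integrand defining $X_n$), together with dominated convergence and the uniform bound $\sup_{n\in\N}\bEE{\abs{X_n}^2}<\infty$, shows that $X_n\to X$ in $L^2(\PP;\R)$. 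In particular $\bEE{X_n^2}\to\bEE{X^2}=1$, and hence for every $\theta\in\R$ it holds that $\bEE{\exp(\sqrt{-1}\,\theta X_n)}=\exp\bp{-\tfrac{\theta^2}2\bEE{X_n^2}}\to\exp\bp{-\tfrac{\theta^2}2}$, while dominated convergence (the integrands have modulus at most $1$) applied to the almost surely convergent sequence $\exp(\sqrt{-1}\,\theta X_n)\to\exp(\sqrt{-1}\,\theta X)$ gives $\bEE{\exp(\sqrt{-1}\,\theta X_n)}\to\bEE{\exp(\sqrt{-1}\,\theta X)}$. Therefore $\bEE{\exp(\sqrt{-1}\,\theta X)}=\exp(-\theta^2/2)$ for all $\theta\in\R$, which proves that $X$ is a standard normal random variable.

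The only genuinely delicate point is the limiting argument in the last paragraph: one must ensure that the Riemann sums converge to $X$ in a strong enough sense (pathwise convergence plus a uniform $L^2$-bound) so that both the characteristic functions and the second moments pass to the limit simultaneously. Everything else reduces to Fubini/Tonelli bookkeeping based on the elementary bound $\bEE{\abs{W(s)}^2}=s\leq\tau$ and on the identity $\int_r^\tau g'(s)\,\diff s=-g(r)$.
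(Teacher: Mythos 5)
Your proof is correct, and it is in fact somewhat more complete than the paper's own argument, though both are elementary and deliberately avoid stochastic integration theory. Two points of comparison are worth noting. First, the variance computation: the paper also uses Fubini to reduce to a double integral with kernel $\min(s,u)$, but then symmetrises over the region $\{s<u\}$ and applies integration by parts twice together with $g(0)=g(\tau)=0$ and $\tfrac{\partial}{\partial s}[g(s)^2]=2g(s)g'(s)$ to reach $\int_0^\tau g(s)^2\,\diff s=1$. You instead write $\min(s,t)=\int_0^\tau\1_{[0,s]}(r)\1_{[0,t]}(r)\,\diff r$, apply Fubini once more, and use $\int_r^\tau g'(s)\,\diff s=-g(r)$ to factor the triple integral as $\int_0^\tau g(r)^2\,\diff r$; this is a cleaner route to the same identity and bypasses the integration-by-parts bookkeeping. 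Second, and more substantively, the Gaussianity of $X$: the paper's proof ends by asserting that the variance computation ``establishes that $X$ is a standard normal random variable,'' implicitly invoking the standard fact that a pathwise Riemann integral of a continuous deterministic function against a Brownian motion is Gaussian. You supply this missing step explicitly via Riemann-sum approximations, an $L^2$-convergence estimate (a Cauchy--Schwarz bound on $\bEE{\abs{X_n-X}^2}$ together with dominated convergence and the uniform moment bound), and convergence of characteristic functions. This makes your argument self-contained in a way the paper's is not, at the modest cost of a longer limiting argument; otherwise the two proofs are of comparable elementary flavour.
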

\begin{proof}[Proof of \cref{lem:stdnorm}]
  Throughout this proof 
  let $A\subseteq\R^2$
    \intrtype{be the set which satisfies }%
    \intrtypen{satisfy }%
    \beq
      A=\{(x,y)\in\R^2\colon x<y\}
      .
    \eeq
  Observe that 
    \eqref{eq:XintW}
  implies that for all
    $\omega\in\Omega$
  it holds that
  \ba
    \abs{X(\omega)}^2
%     =
%     \bbbabs{\int_0^\tau g'(s)W(s,\omega)\,\diff s}^{2}
    &=
    \bbbp{\int_0^\tau g'(s)W(s,\omega)\,\diff s}\bbbp{\int_0^\tau g'(u)W(u,\omega)\,\diff u}
    \\&=
    \int_0^\tau \int_0^\tau g'(s)g'(u)W(s,\omega)W(u,\omega)\,\diff u\,\diff s.
  \ea
    Fubini's theorem
    hence
  shows that
  \ba
    \bEE{\abs{X}^2}
%     &=
%     \bbbEE{bbabs{\int_0^\tau \int_0^\tau g'(s)W(s)g'(u)W(u)\,\diff u\,\diff s}}\\
    &=
    \int_0^\tau \int_0^\tau g'(s)g'(u)\EE[W(s)W(u)]\,\diff u\,\diff s\\
    &=
    \int_0^\tau \int_0^\tau g'(s)g'(u)\min(s,u)\,\diff u\,\diff s\\
    &=
    \int_0^\tau \int_0^\tau g'(s)g'(u)\min(s,u)\bp{\1_A(s,u)+\1_{\R^2\setminus A}(s,u)}\,\diff u\,\diff s\\
    &=
    \int_0^\tau\int_0^\tau g'(s)g'(u)\min(s,u)\1_A(s,u)\,\diff u\,\diff s
      \\&\qquad+\int_0^\tau\int_0^\tau g'(s)g'(u)\min(s,u)\1_{\R^2\setminus A}(s,u)\,\diff u\,\diff s
    .
  \ea
    This
    and Fubini's theorem
  demonstrate that
  \ba
  \label{eq:EXsq}
    \bEE{\abs{X}^2}
    &=
    \int_0^\tau\int_0^\tau g'(u)g'(s)\min(s,u)\1_A(s,u)\,\diff s\,\diff u\\
      &\qquad+\int_0^\tau\int_0^\tau g'(s)g'(u)\min(s,u)\1_{\R^2\setminus A}(s,u)\,\diff u\,\diff s\\
    &=
    \int_0^\tau \int_0^u g'(u)g'(s)s\,\diff s\,\diff u+\int_0^\tau\int_0^s g'(s)g'(u)u\,\diff u\,\diff s\\
    &=
    2\bbbbr{\int_0^\tau \int_0^s g'(s)g'(u)u\,\diff u\,\diff s}
    =
    2\bbbp{\int_0^\tau g'(s)\bbbbr{\int_0^s g'(u)u\,\diff u}\,\diff s}
    .
  \ea
  Furthermore, note that
    the hypothesis that 
      $\{t\in\R\colon g(t)>0\}\subseteq[0,\tau]$
    and the hypothesis that $g$ is a continuous function
  ensure that 
  \beq
    g(0)=0=g(\tau)
    .
  \eeq
  Combining
    this,
    integration by parts,
    and the hypothesis that $\int_0^\tau \abs{g(s)}^2\,\diff s=1$
  with
    \eqref{eq:EXsq}
  proves that
  \ba
    \bEE{\abs{X}^2}
    &=
    2\bbbp{\bbbbr{g(s)\bbbp{\int_0^s g'(u)u\,\diff u}}_{s=0}^{s=\tau}-\int_0^\tau g(s)g'(s)s\,\diff s}
    \\&=
    -2\bbbbr{\int_0^\tau g(s)g'(s)s\,\diff s}
    =
    -\int_0^\tau \bp{\tfrac\partial{\partial s}\bbr{(g(s))^2}}s\,\diff s
    \\&=
    -\bbbp{\bbr{(g(s))^2s}_{s=0}^{s=\tau}-\int_0^\tau (g(s))^2\,\diff s}
    =
    \int_0^\tau (g(s))^2\,\diff s
    =
    1
    .
  \ea
  Combining
    this
    with \eqref{eq:XintW} 
  establishes that
    $X$ is a standard normal random variable.
  The proof of \cref{lem:stdnorm} is thus completed.
\end{proof}

\subsection{On SDEs with irregularities in the initial value}

\begin{prop}
  \label{prop:failhoelder}
  Let $T\in(0,\infty)$,
  $\tau\in(0,T)$,
  $n\in\N$,
  $f\in C^1(\R,[0,\infty))$,
  $g\in C^2(\R,[0,\infty))$
  satisfy
    %\label{eq:suppfg}
    $\{t\in\R\colon g(t)>0\}\subseteq [0,\tau]$,
    $\{t\in\R\colon f(t)>0\}=(\tau,T)$,
    %\label{eq:intgsq1}
    and $\int_0^\tau \abs{g(s)}^2\,\diff s=1$,
  let $\sigma=(0,1,0,0,0)\in\R^5$,
  let $\norm{\cdot}\colon\R^5\to[0,\infty)$ be the standard norm on $\R^5$,
  let $\mu\colon\R^5\to\R^5$ satisfy for all
  $x=(x_1,x_2,x_3,x_4,x_5)\in\R^5$ that
  \beq
  \label{eq:mudef}
    \mu(x)=\bp{ 1,0,g'(x_1)x_2,f(x_1)x_4x_5,f(x_1)((x_3)^n-(x_4)^2) },
  \eeq
  let $(\Omega,\mathcal F,\PP)$ be a probability space,
  %let $(\mathcal F_t)_{t\in[0,T]}$ be a normal filtration on $(\Omega,\mathcal F)$,
  let $W\colon[0,T]\times\Omega\to\R$ be a
  %an $(\mathcal F_t)_{t\in[0,T]}$-adapted
  standard Brownian motion 
    %on $(\Omega,\mathcal F,\PP)$ 
    with continuous sample paths,
  and let
    $X^x=(X^x_1,X^x_2,X^x_3,X^x_4,X^x_5)\colon[0,T]\times\Omega\to\R^5$, $x\in\R^5$, 
    be stochastic processes
    with continuous sample paths 
  which satisfy for all
    $x\in\R^5$,
    $t\in[0,T]$,
    $\omega\in\Omega$
  that
  \beq
  \label{eq:Xeq}
    X^x(t,\omega)
    =
    x+\int_0^t \mu(X^x(s,\omega))\,\diff s + \sigma W(t,\omega)
    .
  \eeq
  Then it holds for all 
    $t\in(\tau,T)$ 
  that 
    there exists
      $c\in(0,\infty)$ 
    such that 
      for all
        $\eps\in(0,\nicefrac1e]$,
        $h=(0,0,0,\eps,0)\in\R^5$
      it holds that
      \beq
      \label{eq:failhoelderconc}
        \bEE{ \abs{X_4^h(t)-X_4^0(t)} } \geq \babs{ \EE[X_4^h(t)]-\EE[X_4^0(t)] }
        \geq \exp\bp{ -c\,\abs{\ln(\eps)}^{2/n} }
        .
      \eeq
\end{prop}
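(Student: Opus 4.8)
The plan is to solve the SDE \eqref{eq:Xeq} componentwise, using the triangular (cascade) structure of the drift $\mu$ in \eqref{eq:mudef}, and then compute $\EE[X_4^h(t)]$ and $\EE[X_4^0(t)]$ explicitly. First I would note that the first component satisfies $X_1^x(t)=x_1+t$, and since $\sigma$ only perturbs the second coordinate, for the two initial values $h=(0,0,0,\eps,0)$ and $0$ we get $X_1^h(t)=X_1^0(t)=t$ and $X_2^h(t)=X_2^0(t)=W(t)$ for all $t\in[0,T]$. Plugging this into the third component gives the linear ODE (in $t$, pathwise) $\tfrac{\diff}{\diff t}X_3^x(t)=g'(t)W(t)$ with $X_3^h(0)=X_3^0(0)=0$, so $X_3^h(t)=X_3^0(t)=\int_0^t g'(s)W(s)\,\diff s$ for both initial values; call this common quantity $Y(t)$. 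Because $g$ is supported in $[0,\tau]$ and $g(0)=g(\tau)=0$ (as in the proof of \cref{lem:stdnorm}), for $t\ge\tau$ we have $Y(t)=Y(\tau)=\int_0^\tau g'(s)W(s)\,\diff s$, and by \cref{lem:stdnorm} this random variable, call it $Z$, is standard normal.

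Next I would analyze the two-dimensional subsystem $(X_4^x,X_5^x)$. On $[0,\tau]$ we have $f\equiv 0$, so this pair is constant: $X_4^h(\tau)=\eps$, $X_5^h(\tau)=0$, whereas $X_4^0(\tau)=X_5^0(\tau)=0$. For $t\in[\tau,T]$, writing $a:=X_3^x(t)=Z$ (constant in $t$ on this interval) and introducing the time change $\theta(t):=\int_\tau^t f(s)\,\diff s$, the subsystem becomes, pathwise, $\tfrac{\diff}{\diff\theta}X_4=X_4X_5$, $\tfrac{\diff}{\diff\theta}X_5=a^n-(X_4)^2$. For the initial value $0$ we have $X_4^0(\theta)\equiv 0$, $X_5^0(\theta)=a^n\theta$, hence $X_4^0(t)=0$ for all $t$. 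For the initial value $h$, the solution with $X_4(0)=\eps>0$, $X_5(0)=0$ is the standard explicit solution of this Hamiltonian-type system: one checks that $X_4(\theta)=\eps\,|a|^{n/2}\bigl(|a|^{n/2}\cosh(|a|^{n/2}\theta)+\text{(stuff)}\bigr)^{-1}$-type formula holds — more precisely, the conserved quantity is $H=(X_5)^2-2X_4X_5\cdot 0+\dots$; the cleaner route is to observe $\tfrac{\diff}{\diff\theta}(X_5+\,?\,)$ — so I would instead simply record that $X_4(\theta)=\dfrac{\eps\,|a|^n}{|a|^{n/2}\sinh(|a|^{n/2}\theta)+|a|^n\cosh(|a|^{n/2}\theta)}$ solves it (verify by differentiation), which on $\theta>0$ is strictly positive and bounded below by $\eps\exp(-|a|^{n/2}\theta)\cdot|a|^{n/2}/(\text{const})$. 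Since $X_4^h(t)\ge 0\ge X_4^0(t)=0$ surely, $\bEE{|X_4^h(t)-X_4^0(t)|}=\EE[X_4^h(t)]-\EE[X_4^0(t)]=\EE[X_4^h(t)]\ge 0$, giving the first inequality in \eqref{eq:failhoelderconc}.

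For the final, main inequality I would bound $\EE[X_4^h(t)]$ from below using the explicit formula. With $\Theta:=\theta(t)=\int_\tau^t f(s)\,\diff s\in(0,\infty)$ fixed (as $t\in(\tau,T)$ and $f>0$ on $(\tau,T)$), and $a=Z$, we have $X_4^h(t)\ge \eps\,|Z|^{n/2}\exp\bigl(-|Z|^{n/2}\Theta\bigr)/C_1$ for a constant $C_1$ depending only on $\Theta$ — or, absorbing the polynomial factor, $X_4^h(t)\ge c_1\,\eps\exp(-\kappa|Z|^{n/2})$ is too crude; the shape we want matches \cref{lem:stricthelp} with $p=n/2$? — here is the point: set $p:=n/2$ if $n$ is even, but $n\in\N$ is general, so I would instead keep $|Z|^n$ and use $\cosh,\sinh\le\exp$ to write $X_4^h(t)\ge \dfrac{\eps}{\,2\exp(|Z|^{n/2}\Theta)\,}\cdot\dfrac{|Z|^{n/2}}{1}$, then bound $|Z|^{n/2}\exp(-|Z|^{n/2}\Theta)\ge\tfrac1\Theta\exp(-2|Z|^{n/2}\Theta)$ using $xe^{-x}\ge$ no — more simply $ye^{-y}\ge c\,e^{-2y}$ fails for small $y$; instead use $y\ge e^{-y}$ is false too. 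The robust move: since we only need a lower bound, restrict to the event $\{|Z|\ge 1\}$ where $|Z|^{n/2}\ge 1$, so $X_4^h(t)\ge \tfrac\eps2\exp(-|Z|^{n/2}\Theta)\ge\tfrac\eps2\exp(-\Theta|Z|^n)$ on that event, giving $\EE[X_4^h(t)]\ge\tfrac12\,\EE\!\bigl[\1_{\{|Z|\ge1\}}\,\eps\exp(-\Theta|Z|^n)\bigr]$. Finally, to land exactly on \cref{lem:stricthelp} I would instead carry the $\eps^2\kappa\exp(2\kappa|Z|^p)$ correction honestly: in fact the exact solution of $\dot X_4=X_4X_5$, $\dot X_5=a^n-X_4^2$ with $(X_4,X_5)(0)=(\eps,0)$ satisfies, by a Riccati/change-of-variables computation, $X_4(\Theta)=\eps\exp\!\bigl(\kappa|a|^{?}\bigr)\cdot\bigl(1+\text{(something)}\cdot\eps^2(\exp(2\cdot\text{something})-1)\bigr)^{-1/2}$, which after rescaling $a$ by $\Theta$ matches the integrand $\eps\exp(\kappa|Z|^p-\eps^2\kappa\exp(2\kappa|Z|^p))$ of \eqref{eq:stricthelpconc} with $p=n$ (or $p=\max\{n/2,1\}$) and a suitable $\kappa=\kappa(\Theta)$. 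Applying \cref{lem:stricthelp} then yields $\EE[X_4^h(t)]\ge\exp(-c\abs{\ln\eps}^{2/n})$ with $c$ depending only on $n$ and $\Theta$ (hence on $t$), which is \eqref{eq:failhoelderconc}.

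The main obstacle I expect is the second step: correctly solving the nonlinear planar ODE $\dot X_4=X_4X_5$, $\dot X_5=|Z|^n-X_4^2$ in closed form and massaging the resulting expression into precisely the form $\eps\exp(\kappa|Z|^p-\eps^2\kappa\exp(2\kappa|Z|^p))$ demanded by \cref{lem:stricthelp} — getting the exponent $p$, the constant $\kappa$, and the $\eps^2$-correction all to match requires care (the natural substitution is $X_5=\tfrac{\diff}{\diff\theta}\ln X_4$, turning the second equation into a second-order equation $\tfrac{\diff^2}{\diff\theta^2}\ln X_4=|Z|^n-(X_4)^2$, i.e. $\ddot u=|Z|^n-e^{2u}$ for $u=\ln X_4$, which is exactly the ODE whose flow is encoded by $\psi$ in \eqref{eq:stricthelp.defpsi}). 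Everything else — the componentwise cascade solution, the reduction $X_1^h=X_1^0$, $X_2^h=X_2^0$, $X_3^h=X_3^0$, the identification of $Z$ as standard normal via \cref{lem:stdnorm}, and the sign argument $X_4^h\ge X_4^0=0$ turning the $L^1$-distance into the difference of means — is routine.
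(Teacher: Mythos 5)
Your cascade reduction---the identities $X_1^h=X_1^0=t$, $X_2^h=X_2^0=W$, $X_3^h=X_3^0=\int_0^\cdot g'W$ constant past $\tau$ and standard normal by \cref{lem:stdnorm}, together with the sign observation $X_4^h\ge 0=X_4^0$ that turns the $L^1$ distance into a difference of means---coincides exactly with the first half of the paper's proof. The gap is the middle step: you try to produce an explicit closed-form solution of the reduced planar system $\dot X_4=X_4X_5$, $\dot X_5=a^n-X_4^2$ after a time change, and the $\cosh/\sinh$ formula you write down does not solve it (it gives $X_5(0)=-1$, not $0$). In fact, after the substitution $u=\ln X_4$ the system becomes $\ddot u=a^n-e^{2u}$, which is integrable by quadrature via its conserved energy but admits no elementary closed form for general $a\ne 0$; when $a^n>0$ the solutions oscillate around the equilibrium $u=\tfrac12\ln(a^n)$ rather than decaying in the way your formula suggests. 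You also replace $Z^n$ by $\abs Z^n$, which changes the problem for odd $n$. You correctly identify this as the main obstacle but do not actually overcome it, and the subsequent attempts (event restriction to $\{\abs Z\ge 1\}$, guessing $p=n/2$ vs.\ $p=n$) do not converge to a usable estimate.

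The paper never solves the ODE. Instead it observes that $X_4^{\eps v}(t)=\eps\exp\bigl(\int_\tau^t f(s)X_5^{\eps v}(s)\,\diff s\bigr)$ and substitutes the exact integral representation of $X_5$ to obtain the closed (but implicit) identity
\[
  X_4^{\eps v}(t)
  =\eps\exp\Bigl(\kappa_t\bigl[X_3^{\eps v}(\tau)\bigr]^n
  -\int_\tau^t\!\!\int_\tau^s f(s)f(u)\bigl[X_4^{\eps v}(u)\bigr]^2\,\diff u\,\diff s\Bigr),
\]
where $\kappa_t=\int_\tau^t\int_\tau^s f(u)f(s)\,\diff u\,\diff s$. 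Dropping the nonpositive double-integral term yields the crude one-sided bound $X_4^{\eps v}(u)\le\eps\exp(\kappa_u[X_3^{\eps v}(\tau)]^n)$; feeding this back into the same identity bounds the double integral by $\eps^2\kappa_t\exp(2\kappa_t[X_3^{\eps v}(\tau)]^n)$ and hence gives the lower bound $X_4^{\eps v}(t)\ge\eps\exp\bigl(\kappa_t Z^n-\eps^2\kappa_t\exp(2\kappa_tZ^n)\bigr)$, which is \emph{exactly} the integrand of \cref{lem:stricthelp} with $p=n$ and $\kappa=\kappa_t$. This self-improving bootstrap---use the implicit exponential representation, drop the negative correction to get an a priori upper bound, then reinsert it to control the correction---is the missing idea, and it avoids solving the nonlinear ODE altogether. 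I would rework the middle of your argument around this device rather than the explicit-solution route.
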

\begin{proof}[Proof of \cref{prop:failhoelder}]
  Throughout this proof
    let $v=(0,0,0,1,0)\in\R^5$
    and let $\kappa_t\in[0,\infty)$, $t\in[\tau,T]$, satisfy
      for all $t\in[\tau,T]$
    that
    \beq
      \kappa_t=\int_\tau^t\int_\tau^s f(u)f(s)\,\diff u\,\diff s.
    \eeq
  Observe that 
    \eqref{eq:mudef} 
    and \eqref{eq:Xeq}
  imply that for all 
    $\eps\in[0,\infty)$,
    $t\in[0,T]$,
    $\omega\in\Omega$
  it holds that
  \ba
  \label{eq:X12}
    X^{\eps v}_1(t,\omega)&=\int_0^t 1\,\diff s=t\\
    \text{and}\qquad X^{\eps v}_2(t,\omega)&=\int_0^t 0\,\diff s + W(t,\omega)=W(t,\omega)
    .
  \ea
    This,
    \eqref{eq:mudef},
    and \eqref{eq:Xeq}
  show that for all 
    $\eps\in[0,\infty)$,
    $t\in[0,T]$,
    $\omega\in\Omega$
  it holds that %\T{(not sure what the $\diff W(s)$ means)}
  \beq
  \label{eq:X32}
    X_3^{\eps v}(t,\omega)
    =
    \int_0^t g'(X^{\eps v}_1(s,\omega))X^{\eps v}_2(s,\omega)\,\diff s
    =
    \int_0^t g'(s)W(s,\omega)\,\diff s
    .
  \eeq
    The hypothesis that 
      $\{t\in\R\colon g(t)>0\}\subseteq[0,\tau]$
    hence
  ensures that for all 
    $\eps\in[0,\infty)$,
    $t\in[\tau,T]$,
    $\omega\in\Omega$
  it holds that
  \beq
  \label{eq:X3}
    X^{\eps v}_3(t,\omega)
    %=\int_0^\tau g'(s)W(s,\omega)\,\diff s+\int_\tau^t g'(s)W(s,\omega)\,\diff s
    =
    \int_0^\tau g'(s)W(s,\omega)\,\diff s
    =
    X^{\eps v}_3(\tau,\omega)
    .
    %=\int_0^\tau g'(s)\,\diff W(s)
  \eeq
  Next note that 
    \eqref{eq:mudef},
    \eqref{eq:Xeq},
    and \eqref{eq:X12} 
  prove that for all 
    $\eps\in[0,\infty)$,
    $t\in[0,T]$,
    $\omega\in\Omega$
  it holds that
  \ba
    X^{\eps v}_4(t,\omega)
    &=
    \eps+\int_0^t f(X^{\eps v}_1(s,\omega))X^{\eps v}_4(s,\omega)X^{\eps v}_5(s,\omega)\,\diff s\\
    &=
    \eps+\int_0^t f(s)X^{\eps v}_4(s,\omega)X^{\eps v}_5(s,\omega)\,\diff s
    .
  \ea
    Hence,
  we obtain that for all 
    $\eps\in[0,\infty)$,
    $t\in[0,T]$,
    $\omega\in\Omega$
  it holds that
  \beq
    X^{\eps v}_4(t,\omega)=\eps\exp\bbbp{\int_0^t f(s) X^{\eps v}_5(s,\omega)\,\diff s }
    .
  \eeq
    The hypothesis that
      $\{t\in\R\colon f(t)>0\}=(\tau,T)$
    hence
  implies that for all 
    $\eps\in[0,\infty)$,
    $t\in[\tau,T]$,
    $\omega\in\Omega$
  it holds that
  \beq
  \label{eq:X4}
    X^{\eps v}_4(t,\omega)=\eps\exp\bbbp{\int_\tau^t f(s) X^{\eps v}_5(s,\omega)\,\diff s }
    .
  \eeq
  Moreover, observe that
    \eqref{eq:mudef},
    \eqref{eq:Xeq},
    and \eqref{eq:X12}
  show that for all 
    $\eps\in[0,\infty)$,
    $s\in[0,T]$,
    $\omega\in\Omega$
  it holds that
  \ba
    X^{\eps v}_5(s,\omega)
    &=
    \int_0^s f(X^{\eps v}_1(u,\omega))\bp{[X^{\eps v}_3(u,\omega)]^n-[X^{\eps v}_4(u,\omega)]^2}\,\diff u
    \\&=
    \int_0^s f(u)\bp{ [X^{\eps v}_3(u,\omega)]^n-[X^{\eps v}_4(u,\omega)]^2 }\,\diff u
    .
  \ea
  Combining
    this
  with
    the hypothesis that
      $\{t\in\R\colon f(t)>0\}=(\tau,T)$
    and \eqref{eq:X3}
  demonstrates that for all 
    $\eps\in[0,\infty)$,
    $s\in[\tau,T]$,
    $\omega\in\Omega$
  it holds that
  \ba
  \label{eq:X5}
    X^{\eps v}_5(s,\omega)
    &=
    \int_\tau^s f(u)\bp{ [X^{\eps v}_3(u,\omega)]^n-[X^{\eps v}_4(u,\omega)]^2 }\,\diff u
    \\&=
    \int_\tau^s f(u)\bp{ [X^{\eps v}_3(\tau,\omega)]^n-[X^{\eps v}_4(u,\omega)]^2 }\,\diff u
    \\&=
    [X^{\eps v}_3(\tau,\omega)]^n\bbbbr{\int_\tau^s f(u)\,\diff u}-\bbbbr{\int_\tau^s f(u)[X^{\eps v}_4(u,\omega)]^2\,\diff u}
    .
  \ea
    This
    and \eqref{eq:X4} 
  prove that for all
    $\eps\in[0,\infty)$,
    $t\in[\tau,T]$,
    $\omega\in\Omega$
  it holds that
  \begin{align}
  \label{eq:X4.2}
    \nonumber &X^{\eps v}_4(t,\omega)\\
    &=
    \nonumber \eps\exp\bbbp{\int_\tau^t  f(s) \bbbbr{[X^{\eps v}_3(\tau,\omega)]^n\bbbp{\int_\tau^s f(u)\,\diff u}-\bbbp{\int_\tau^s f(u)[X^{\eps v}_4(u,\omega)]^2\,\diff u}}\,\diff s }\\
    &=
    \nonumber \eps\exp\bbbp{\int_\tau^t f(s) [X^{\eps v}_3(\tau,\omega)]^n\bbbp{\int_\tau^s f(u)\,\diff u}\diff s-\int_\tau^tf(s)\bbbp{\int_\tau^s f(u)[X^{\eps v}_4(u,\omega)]^2\,\diff u}\diff s }\\
    &=
    \eps\exp\bbbp{[X^{\eps v}_3(\tau,\omega)]^n\int_\tau^t  \int_\tau^s f(u)f(s)\,\diff u\,\diff s-\int_\tau^t\int_\tau^s f(s)f(u)[X^{\eps v}_4(u,\omega)]^2\,\diff u\,\diff s }\\
    &=
    \nonumber \eps\exp\bbbp{\kappa_t[X^{\eps v}_3(\tau,\omega)]^n-\int_\tau^t\int_\tau^s f(s)f(u)[X^{\eps v}_4(u,\omega)]^2\,\diff u\,\diff s }
    .
  \end{align}
    Therefore,
  we obtain that for all 
    $\eps\in[0,\infty)$,
    $u\in[\tau,T]$,
    $\omega\in\Omega$
  it holds that
  \beq
    \label{eq:bndX10}
    X^{\eps v}_4(u,\omega)
    \leq
    \eps\exp\bp{\kappa_u[X^{\eps v}_3(\tau,\omega)]^n}
    .
  \eeq
  Combining 
    this 
  with
    the fact that for all
      $s\in[\tau,T]$,
      $t\in[s,T]$
    it holds that
    $
    %\label{eq:kappaincr}
      \kappa_t\geq \kappa_s
    $
  ensures that for all 
    $\eps\in[0,\infty)$,
    $t\in[\tau,T]$,
    $\omega\in\Omega$
  it holds that
  \ba
    &\int_\tau^t\int_\tau^s f(s)f(u)[X^{\eps v}_4(u,\omega)]^2\,\diff u\,\diff s
    \leq
    \int_\tau^t\int_\tau^s f(s)f(u)\eps^2\exp\bp{2\kappa_u[X^{\eps v}_3(\tau,\omega)]^n}\,\diff u\,\diff s
    \\&\leq
    \eps^2\exp\bp{2\kappa_t[X^{\eps v}_3(\tau,\omega)]^n}\int_\tau^t\int_\tau^s f(s)f(u)\,\diff u\,\diff s
    =
    \eps^2\kappa_t\exp\bp{2\kappa_t[X^{\eps v}_3(\tau,\omega)]^n}
    .
  \ea
    This 
    and \eqref{eq:X4.2}
  establish that for all 
    $\eps\in[0,\infty)$,
    $t\in[\tau,T]$,
    $\omega\in\Omega$
  it holds that
  \beq
  \label{eq:bndX4}
    X^{\eps v}_4(t,\omega)
    \geq
    \eps\exp\bbp{\kappa_t[X^{\eps v}_3(\tau,\omega)]^n-\eps^2\kappa_t\exp\bp{ 2\kappa_t[X^{\eps v}_3(\tau,\omega)]^n } }
    .
  \eeq
  Furthermore, note that 
    \eqref{eq:X4} 
  implies that for all 
    $t\in[\tau,T]$,
    $\omega\in\Omega$
  it holds that
  \beq
    X^0_4(t,\omega)=0
    .
  \eeq
    This,
    \eqref{eq:X3},
    and \eqref{eq:bndX4} 
  show that for all 
    $\eps\in[0,\infty)$,
    $t\in[\tau,T]$
  it holds that
  \ba
  \label{eq:distEX4bnd}
    \babs{\EE[X^{\eps v}_4(t)] - \EE[X^0_4(t)]}
    &=
    \babs{\EE[X^{\eps v}_4(t)]}
    % =
    % \EE[X^{\eps v}_4(t)]
    \\&\geq 
    \bEE{\eps\exp\bp{\kappa_t[X^{\eps v}_3(\tau)]^n-\eps^2\kappa_t\exp\bp{ 2\kappa_t[X^{\eps v}_3(\tau)]^n } }}
    \\&=
    \bEE{\eps\exp\bp{\kappa_t[X^{\eps v}_3(t)]^n-\eps^2\kappa_t\exp\bp{ 2\kappa_t[X^{\eps v}_3(t)]^n } }}
    .
  \ea
  In the next step we observe that
    the fact that $\{t\in\R\colon f(t)>0\}=(\tau,T)$
  ensures that for all 
    $t\in(\tau,T]$ 
  it holds that
  \beq
  \label{eq:kappapos}
    \kappa_t>0.
  \eeq
  In addition, note that
    \eqref{eq:X3}
    and \cref{lem:stdnorm}
    (with
      $\tau\is\tau$,
      $(\Omega,\mc F,\PP)\is(\Omega,\mc F,\PP)$,
      $g\is g$,
      $W\is W|_{[0,\tau]\times\Omega}$,
      $X\is (\Omega\ni\omega\mapsto X_3^{\eps v}(t,\omega)\in\R)$
      for
      $\eps\in[0,\infty)$,
      $t\in[\tau,T]$
    in the notation of \cref{lem:stdnorm})
  demonstrate that for all
    $\eps\in[0,\infty)$,
    $t\in[\tau,T]$
  it holds that
    $X_3^{\eps v}(t)$ is a standard normal random variable.
  Combining 
    this 
    and \eqref{eq:kappapos} 
  with 
    \cref{lem:stricthelp} 
      (with
        $p\is n$,
        $\kappa\is\kappa_t$,
        $c\is n(\kappa_t)^{-2/n}+(\sqrt{2\pi}n+1)\kappa_t+1$,
        $\eps\is\eps$,
        $(\Omega,\mc F,\PP) \is (\Omega,\mc F,\PP)$,
        $Z\is (\Omega\ni\omega\mapsto X^{\eps v}_3(t,\omega)\in\R)$
        for
        $\eps\in(0,\nicefrac1e]$, 
        $t\in(\tau,T)$
      in the notation of \cref{lem:stricthelp})
  proves that for all 
    $t\in(\tau,T)$
  there exist 
    $c\in(0,\infty)$ 
  such that for all 
    $\eps\in(0,\nicefrac1e]$
  it holds that
  \beq
    \bEE{\eps\exp\bp{\kappa_t[X^{\eps v}_3(t)]^n-\eps^2\kappa_t\exp\bp{ 2\kappa_t[X^{\eps v}_3(t)]^n } }}
    \geq 
    \exp\bp{ -c\,\abs{\ln(\eps)}^{2/n} }
    .
  \eeq
    This 
    and \eqref{eq:distEX4bnd} 
  establish \eqref{eq:failhoelderconc}.
  The proof of \cref{prop:failhoelder} is thus completed.
\end{proof}

\section{On the existence of solutions to axis-aligned  SDEs with non-locally H\"older continuous
dependence on the initial values}

\label{sec:axis-aligned-existence}

In this section we establish in \cref{prop:lem1} below, roughly speaking, 
the existence of solutions to certain additive noise driven SDEs whose 
solutions depend 
non-locally H\"older continuously on their initial values and whose
drift coefficient functions are smooth with at most polynomially 
growing derivatives.
In our proof of \cref{prop:lem1} we employ 
\cref{prop:failhoelder} above,
the elementary fact in \cref{lem:exist0} below that certain drift coefficient
functions have at most polynomially growing derivatives,
the elementary fact in \cref{lem:exist} below that appropriate drift coefficient functions 
satisfy a suitable Lyapunov-type condition,
as well as the well-known result 
on the existence of certain smooth bump functions in \cref{lem:bump}
below.
In our proof of \cref{lem:exist0} below we employ the well-known fact
in \cref{lem:opnorm} below
that the Frobenius norm is an upper bound for the operator norm
induced by the standard norm.
% For completeness we also include in this section a
% proof for \cref{lem:opnorm}.

\subsection{On drift coefficient functions with at most polynomially growing derivatives}

\begin{lemma}
\label{lem:opnorm}
  Let $d\in\N$, 
  $A=(a_{i,j})_{i,j\in\{1,2,\dots,d\}}\in\R^{d\times d}$
  and let $\norm{\cdot}\colon\R^d\to[0,\infty)$ be the standard norm on $\R^d$.
  Then it holds for all $x\in\R^d$ that
  \beq
    \label{eq:opnorm}
    \norm{Ax}\leq \bbbbbr{\sum_{i,j=1}^d \abs{a_{i,j}}^2}^{\frac12}\norm{x}.
  \eeq
\end{lemma}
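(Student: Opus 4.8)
The plan is to establish \eqref{eq:opnorm} by a coordinatewise application of the Cauchy--Schwarz inequality on $\R^d$. First I would fix $x=(x_1,x_2,\dots,x_d)\in\R^d$ and note that for every $i\in\{1,2,\dots,d\}$ the $i$-th component of the vector $Ax\in\R^d$ is given by $\sum_{j=1}^d a_{i,j}x_j$. The Cauchy--Schwarz inequality applied to this finite sum then yields
\[
  \Bigl|\sum_{j=1}^d a_{i,j}x_j\Bigr|^2
  \leq
  \Bigl(\sum_{j=1}^d |a_{i,j}|^2\Bigr)\Bigl(\sum_{j=1}^d |x_j|^2\Bigr)
  =
  \Bigl(\sum_{j=1}^d |a_{i,j}|^2\Bigr)\norm{x}^2.
\]

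Next I would sum this estimate over all $i\in\{1,2,\dots,d\}$ and use the definition of the standard norm on $\R^d$ to deduce that
\[
  \norm{Ax}^2
  =
  \sum_{i=1}^d\Bigl|\sum_{j=1}^d a_{i,j}x_j\Bigr|^2
  \leq
  \Bigl(\sum_{i=1}^d\sum_{j=1}^d |a_{i,j}|^2\Bigr)\norm{x}^2
  =
  \Bigl(\sum_{i,j=1}^d |a_{i,j}|^2\Bigr)\norm{x}^2.
\]
Taking nonnegative square roots on both sides of this inequality then gives \eqref{eq:opnorm}, which completes the argument.

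Since the only substantive tool needed is the Cauchy--Schwarz inequality in $\R^d$, I do not anticipate any real obstacle; the sole point that calls for a bit of care is the bookkeeping of the double sum, namely observing that $\sum_{i=1}^d\sum_{j=1}^d |a_{i,j}|^2$ coincides with the squared Frobenius norm $\sum_{i,j=1}^d |a_{i,j}|^2$ appearing on the right-hand side of \eqref{eq:opnorm}.
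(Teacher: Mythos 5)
Your argument is correct and is the standard textbook proof of the fact that the Frobenius norm bounds the operator norm induced by the Euclidean norm: apply Cauchy--Schwarz to each row of $A$ acting on $x$, sum the resulting squared bounds over the rows, and take square roots. The paper itself gives no proof of this lemma, treating it as an essentially well-known fact, so there is nothing in the source to compare against; your write-up fills that gap cleanly and there are no missing steps or hidden obstacles.
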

% \begin{proof}[Proof of \cref{lem:opnorm}]
%   Throughout this proof 
%     let $v_j\in\R^d$, $j\in\{1,2,\dots,d\}$, satisfy for all 
%       $j\in\{1,2,\dots,d\}$ 
%     that
%     \beq
%       v_j=(a_{1,j},a_{2,j},\dots,a_{d,j}).
%     \eeq
%   Note that 
%     the triangle inequality
%     and the Cauchy-Schwarz inequality
%   prove that for all 
%     $x=(x_1,x_2,\dots,x_d)\in\R^d$ 
%   it holds that
%   \ba
%     \norm{Ax}^2
%     &=
%     \bbbbnorm{\sum_{j=1}^d x_jv_j}^2
%     \leq
%     \bbbbbr{\sum_{j=1}^d \abs{x_j}\norm{v_j}}^{\!2}
%     \leq
%     \bbbbp{\sum_{j=1}^d\abs{x_j}^2}\bbbbp{\sum_{j=1}^d\norm{v_j}^2}
%     \\&=
%     \bbbbp{\sum_{j=1}^d\norm{v_j}^2}\norm{x}^2
%     =
%     \bbbbp{\sum_{j=1}^d\sum_{i=1}^d \abs{a_{i,j}}^2}\norm{x}^2.
%   \ea
%   This implies \eqref{eq:opnorm}.
%   The proof of \cref{lem:opnorm} is thus completed.
% \end{proof}

\begin{lemma}
  \label{lem:exist0}
  Let $n\in\N\cap[2,\infty)$,
    $c\in[0,\infty)$,
  let $\norm{\cdot}\colon\R^5\to[0,\infty)$ be the standard norm on $\R^5$,
  let $\mu\colon\R^5\to\R^5$,
    $f\in C^1(\R,[0,\infty))$,
    $g\in C^2(\R,[0,\infty))$
  satisfy for all
  $x=(x_1,x_2,x_3,x_4,\allowbreak x_5)\in\R^5$ that
  \beq
  \label{eq:exist0.defmu}
    \mu(x)
    =
    \bp{ 1,0,g'(x_1)x_2,f(x_1)x_4x_5,f(x_1)\bbr{(x_3)^n-(x_4)^2} },
  \eeq
  and assume 
    $c\geq \sup\nolimits_{t\in\R}\bbr{\max\{\abs{f(t)},\abs{f'(t)},\abs{g'(t)},\abs{g''(t)}\}}$.
  Then it holds for all
    $x,h\in\R^5$
  that
    $\mu\in C^1(\R^5,\R^5)$
  and
  \begin{equation}
    \norm{\mu'(x)h}
    \leq 
    4nc(1+\norm{x}^n)\norm h
    .
  \end{equation}
\end{lemma}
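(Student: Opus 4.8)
The plan is to reduce the desired estimate to a crude entrywise bound on the Jacobian matrix $\mu'(x)$ and then to invoke \cref{lem:opnorm}. First I would record that $\mu\in C^1(\R^5,\R^5)$: since $f\in C^1(\R,[0,\infty))$ and $g\in C^2(\R,[0,\infty))$, the function $g'$ lies in $C^1(\R,[0,\infty))$, and hence each of the five component functions of $\mu$ in \eqref{eq:exist0.defmu} is obtained from the coordinate projections $x\mapsto x_i$ and the $C^1$ functions $f$ and $g'$ by finitely many multiplications; therefore $\mu$ is continuously differentiable.

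Next I would differentiate \eqref{eq:exist0.defmu} componentwise. The first two components of $\mu$ are constant, so the first two rows of $\mu'(x)$ vanish identically; the remaining rows carry the (at most) eight possibly nonzero entries $g''(x_1)x_2$, $g'(x_1)$ (row $3$), $f'(x_1)x_4x_5$, $f(x_1)x_5$, $f(x_1)x_4$ (row $4$), and $f'(x_1)\bbr{(x_3)^n-(x_4)^2}$, $nf(x_1)(x_3)^{n-1}$, $-2f(x_1)x_4$ (row $5$). Using the hypothesis $\max\{\abs{f(t)},\abs{f'(t)},\abs{g'(t)},\abs{g''(t)}\}\leq c$ for all $t\in\R$, the trivial bounds $\abs{x_i}\leq\norm x$, and the elementary inequality $\norm x^k\leq 1+\norm x^n$ valid for $k\in\{1,2,\dots,n\}$ (immediate from distinguishing the cases $\norm x\leq 1$ and $\norm x\geq 1$, and using $n\geq 2$), I would check that each of these eight entries is bounded in absolute value by $cn(1+\norm x^n)$. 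The only entry needing more than a one-line estimate is $f'(x_1)\bbr{(x_3)^n-(x_4)^2}$, which by the triangle inequality is at most $c\bp{\norm x^n+\norm x^2}\leq 2c(1+\norm x^n)\leq cn(1+\norm x^n)$.

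Finally, \cref{lem:opnorm} applied to $A=\mu'(x)$ yields, for all $x,h\in\R^5$, that $\norm{\mu'(x)h}\leq\bbr{\sum_{i,j=1}^5\abs{(\mu'(x))_{i,j}}^2}^{1/2}\norm h$; since at most eight of the summands are nonzero and each is at most $\bbr{cn(1+\norm x^n)}^2$, the right-hand side is at most $\sqrt 8\,cn(1+\norm x^n)\norm h\leq 4nc(1+\norm x^n)\norm h$, which is precisely the asserted estimate. I expect no genuine obstacle here: the proof is entirely routine, and the only points demanding care are computing the Jacobian correctly — in particular the $\partial_{x_1}$-derivative of the last component — and not overcounting the nonzero entries, since eight stays safely below the threshold $16$ that would otherwise inflate the constant beyond $4$.
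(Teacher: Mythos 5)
Your proof is correct and takes essentially the same route as the paper: differentiate \eqref{eq:exist0.defmu} componentwise to get the Jacobian, then pass from the Frobenius norm to the operator norm via \cref{lem:opnorm}. The only cosmetic difference is in the final bookkeeping: the paper bounds the Frobenius sum of squares term by term and then absorbs everything into $14n^2(1+\norm x^{2n})$, whereas you bound each of the eight nonzero entries uniformly by $cn(1+\norm x^n)$ and pick up the factor $\sqrt8<4$; both accountings land on the same constant $4nc$.
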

\begin{proof}[Proof of \cref{lem:exist0}]
  Observe that
    \eqref{eq:exist0.defmu},
    the hypothesis that
      $f\in C^1(\R,[0,\infty))$,
    and the hypothesis that
      $g\in C^2(\R,[0,\infty))$
  imply that
    for all 
    $x=(x_1,x_2,x_3,x_4,x_5)\in\R^5$ 
  it holds that
    $\mu\in C^1(\R^5,\R^5)$
  and
  \ba
    \label{eq:exist0.diffmu}
    \mu'(x)
    =
    \begin{pmatrix}
      0&0&0&0&0\\
      0&0&0&0&0\\
      g''(x_1)x_2&g'(x_1)&0&0&0\\
      f'(x_1)x_4x_5&0&0&f(x_1)x_5&f(x_1)x_4\\
      f'(x_1)\bbr{(x_3)^n-(x_4)^2}&0&nf(x_1)(x_3)^{n-1}&-2f(x_1)x_4&0
    \end{pmatrix}\!
    .
  \ea
  Moreover, note that
    the hypothesis that $n\geq 2$ 
    and the triangle inequality
  show that for all
    $x=(x_1,x_2,x_3,x_4,x_5)\in\R^5$
  it holds that
  \beq
  \label{eq:diffmu3}
  \begin{split}
    \babs{(x_3)^n-(x_4)^2}^2
    &=
    (x_3)^{2n}-2(x_3)^n(x_4)^2+(x_4)^4
    \leq
    \abs{x_3}^{2n}+2\abs{x_3}^n\abs{x_4}^2+\abs{x_4}^4
    \\&\leq
    \norm{x}^{2n}+2\norm x^{n+2}+\norm{x}^4
    \leq
    4(1+\norm{x}^{2n})
    .
  \end{split}
  \eeq
  Combining 
    this 
  with
    \eqref{eq:exist0.diffmu}
    and \cref{lem:opnorm}
  shows that
  for all
    $x=(x_1,x_2,x_3,x_4,x_5)\in\R^5$,
    $h\in\R^5$
  it holds that
  \begin{align}
  \label{eq:mudiffbnd2}
    \nonumber\norm{\mu'(x)h}
    &\leq
    \bigl[\abs{g''(x_1)}^2\abs{x_2}^2+\abs{g'(x_1)}^2+\abs{f'(x_1)}^2\abs{x_4}^2\abs{x_5}^2+\abs{f(x_1)}^2\abs{x_5}^2
      +\abs{f(x_1)}^2\abs{x_4}^2
      \\\nonumber&\qquad +\abs{f'(x_1)}^2\babs{(x_3)^n-(x_4)^2}^2+n^2\abs{f(x_1)}^2\abs{x_3}^{2n-2}
      +4\abs{f(x_1)}^2\abs{x_4}^2\bigr]^{1/2}\norm h
    \\&\leq \nonumber
    \bigl[c^2\norm{x}^2+c^2+c^2\norm{x}^2\norm{x}^2+c^2\norm{x}^2
      +c^2\norm{x}^2+4c^2(1+\norm{x}^{2n})
      \\&\qquad+n^2c^2\norm{x}^{2n-2}
      +4c^2\norm{x}^2\bigr]^{1/2}\norm h
    \\&=\nonumber
    c\bigl[7\norm{x}^2+1+\norm{x}^4
      +4(1+\norm{x}^{2n})
      +n^2\norm{x}^{2n-2}
    \bigr]^{1/2}\norm h
    \\&\leq\nonumber
    c\bigl[(13+n^2)(1+\norm x^{2n})\bigr]^{1/2}\norm h
    \\&\leq \nonumber
    c\bigl[14n^2(1+\norm x^{2n})\bigr]^{1/2}\norm h
    \leq 
    4cn(1+\norm x^n)\norm h
    .
  \end{align}
  This completes the proof of \cref{lem:exist0}.
\end{proof}

\subsection{On suitable Lyapunov-type functions for additive noise driven SDEs}

\begin{lemma}\label{lem:exist}
  %Let $\ang{\cdot,\cdot}\colon \R^5\times\R^5\to\R$ be the standard scalar 
  %product on $\R^5$,
  %let $\norm{\cdot}\colon\R^5\to\R$ denote the function which satisfies
  %for all $x\in\R^5$ that $\norm{x}=\sqrt{\ang{x,x}}$,
  Let
  $n\in\N$,
  $p\in[1,\infty)$,
  $q\in[2pn,\infty)$,
  $f\in C^1(\R,[0,\infty))$,
  $g\in C^2(\R,[0,\infty))$,
  $\sigma=(0,1,0,0,0)\in\R^5$,
  let $\norm{\cdot}\colon\R^5\to[0,\infty)$ be the standard norm on $\R^5$,
  let $\mu\colon\R^5\to\R^5$ satisfy for all
  $x=(x_1,x_2,x_3,x_4,x_5)\in\R^5$ that
  \beq
    \mu(x)=\bp{ 1,0,g'(x_1)x_2,f(x_1)x_4x_5,f(x_1)\bbr{(x_3)^n-(x_4)^2} },
  \eeq
  and let $V\colon\R^5\to[0,\infty)$ 
    \intrtype{be the function which satisfies }%
    \intrtypen{satisfy }%
    for all
      $x=(x_1,x_2,x_3,x_4,x_5)\in\R^5$ 
    that
    \beq
    \label{eq:exist.defV}
      V(x)=\babs{ 1+(x_1)^2+(x_4)^2+(x_5)^2 }^p + \abs{x_2}^q + \abs{x_3}^q + 1.
    \eeq
  Then
%   \begin{enumerate}[label=(\roman{enumi}),ref=(\roman{enumi})]
%     \item[(i)] $\mu\in C^{\infty}(\R^5,\R^5)$ and there exist
%     $\kappa\in(0,\infty)$, $p\in\N$ \T{genauer?} such that for all 
%     $x\in\R^5$
%     it holds that
%     \beq
%     \opnorm{\mu'(x)}\leq\kappa(1+\norm{x}^p),
%     \eeq
%     \item \label{thmstrict:enum:0}  
    it holds for all
      $x,h\in\R^5$, 
      $z\in\R$
    that
    $V\in C^1(\R^5,[0,\infty))$, $\norm x\leq V(x)$, and
    \ba
    \label{eq:lemexistconc}
      V'(x)\mu(x+\sigma z)
      \leq 
      \bp{2p+(2p+q)\bp{\sup\nolimits_{t\in\R}\bbr{\max\{\abs{f(t)},\abs{g'(t)}\}}}}(1+\abs{z})V(x)
      .
    \ea
\end{lemma}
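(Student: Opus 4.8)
The plan is to verify the three assertions in turn, namely that $V\in C^1(\R^5,[0,\infty))$, that $\norm x\le V(x)$, and that the differential inequality \eqref{eq:lemexistconc} holds; the quantifier over $h$ is vacuous since $h$ does not appear in the conclusion. For the regularity, observe that $1+(x_1)^2+(x_4)^2+(x_5)^2\ge 1>0$, so the first summand of $V$ agrees with $(1+(x_1)^2+(x_4)^2+(x_5)^2)^p$, which is $C^\infty$ in $(x_1,x_4,x_5)$; moreover, since $q\ge 2pn\ge 2>1$, the map $\R\ni t\mapsto\abs t^q\in\R$ is continuously differentiable with derivative $t\mapsto q\abs t^{q-2}t$. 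Hence $V\in C^1(\R^5,[0,\infty))$, with $V'(x)=(2px_1 S^{p-1},\,q\abs{x_2}^{q-2}x_2,\,q\abs{x_3}^{q-2}x_3,\,2px_4 S^{p-1},\,2px_5 S^{p-1})$, where I abbreviate $S:=1+(x_1)^2+(x_4)^2+(x_5)^2\in[1,\infty)$.

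For $\norm x\le V(x)$ I would first prove $\norm x^2\le V(x)$. Bernoulli's inequality together with $p\ge1$ gives $S^p\ge 1+(x_1)^2+(x_4)^2+(x_5)^2$, and $q\ge2$ gives $\abs t^q\ge t^2-1$ for every $t\in\R$, so that $\abs{x_2}^q\ge(x_2)^2-1$ and $\abs{x_3}^q\ge(x_3)^2-1$; adding these up yields $V(x)\ge(x_1)^2+(x_2)^2+(x_3)^2+(x_4)^2+(x_5)^2=\norm x^2$. Since in addition $V(x)\ge1$, a case distinction (the bound $\norm x\le1\le V(x)$ when $\norm x\le1$, and $\norm x\le\norm x^2\le V(x)$ when $\norm x>1$) then gives $\norm x\le V(x)$.

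The heart of the proof is the differential inequality. Since $\sigma z=(0,z,0,0,0)$, only the second coordinate of $x+\sigma z=(x_1,x_2+z,x_3,x_4,x_5)$ differs from that of $x$, so $\mu(x+\sigma z)=(1,0,g'(x_1)(x_2+z),f(x_1)x_4x_5,f(x_1)[(x_3)^n-(x_4)^2])$. Forming the inner product $V'(x)\mu(x+\sigma z)$, the contribution $2pf(x_1)S^{p-1}(x_4)^2x_5$ coming from the fourth coordinate cancels the contribution $-2pf(x_1)S^{p-1}x_5(x_4)^2$ coming from the fifth coordinate; this cancellation is precisely the structural reason why $V$ is a Lyapunov-type function for this drift. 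What survives is $V'(x)\mu(x+\sigma z)=2px_1 S^{p-1}+qg'(x_1)\abs{x_3}^{q-2}x_3(x_2+z)+2pf(x_1)S^{p-1}x_5(x_3)^n$.

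It remains to bound these three terms. Put $F:=\sup_{t\in\R}\max\{\abs{f(t)},\abs{g'(t)}\}$; if $F=\infty$ the right-hand side of \eqref{eq:lemexistconc} is $+\infty$ and there is nothing to prove, so assume $F<\infty$. Using repeatedly that $S\ge1$, $(x_1)^2\le S$ and $(x_5)^2\le S$: the first term is $\le 2p\abs{x_1}S^{p-1}\le 2pS^{p-1/2}\le 2pS^p\le 2pV(x)$; the second term is $\le qF\abs{x_3}^{q-1}(\abs{x_2}+\abs z)$, where $\abs{x_3}^{q-1}\abs{x_2}\le\tfrac{q-1}{q}\abs{x_3}^q+\tfrac1q\abs{x_2}^q\le V(x)$ by Young's inequality with conjugate exponents $\tfrac q{q-1}$ and $q$, and $\abs{x_3}^{q-1}\le 1+\abs{x_3}^q\le V(x)$, so the second term is $\le qF(1+\abs z)V(x)$. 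The genuinely delicate point — and, I expect, the main obstacle — is the third term: it is $\le 2pF S^{p-1/2}\abs{x_3}^n$, and to bound $S^{p-1/2}\abs{x_3}^n$ by a multiple of $V(x)$ I would invoke the weighted AM--GM inequality in the form $S^{p-1/2}\abs{x_3}^n=(S^p)^{1-\frac1{2p}}(\abs{x_3}^q)^{\frac nq}\cdot 1^{\,\frac1{2p}-\frac nq}\le S^p+\abs{x_3}^q+1\le V(x)$, which is legitimate exactly because the three exponents $1-\tfrac1{2p}$, $\tfrac nq$, $\tfrac1{2p}-\tfrac nq$ are nonnegative and sum to $1$; this nonnegativity of $\tfrac1{2p}-\tfrac nq$ is where the hypothesis $q\ge 2pn$ is used. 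Hence the third term is $\le 2pFV(x)$, and summing the three bounds (and using $1+\abs z\ge1$, $V(x)\ge0$) gives $V'(x)\mu(x+\sigma z)\le(2p+(2p+q)F)(1+\abs z)V(x)$, which is \eqref{eq:lemexistconc}. Apart from getting this interpolation of exponents to close under $q\ge 2pn$, the only subtlety is that one must really use the fourth/fifth-coordinate cancellation rather than estimate $S^{p-1}(x_4)^2x_5$ directly, since a brute-force bound of that monomial would only produce the unusable power $S^{p+1/2}$.
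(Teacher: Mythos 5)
Your proof is correct and follows essentially the same route as the paper: both arguments hinge on the cancellation of the $f(x_1)(x_4)^2x_5$ contributions from the fourth and fifth coordinates, and both then bound the surviving terms $2px_1S^{p-1}$, $qg'(x_1)\abs{x_3}^{q-2}x_3(x_2+z)$, and $2pf(x_1)S^{p-1}x_5(x_3)^n$ by $V(x)$ via Young/AM--GM, with the hypothesis $q\geq 2pn$ entering exactly at the $S^{p-1/2}\abs{x_3}^n$ estimate. The only cosmetic deviation is that you close that last estimate with a three-term weighted AM--GM while the paper uses a two-term Young inequality with exponents $2p$ and $p/(p-\tfrac12)$ followed by $\abs{x_3}^{2pn}\leq 1+\abs{x_3}^q$.
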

\begin{proof}[Proof of \cref{lem:exist}]
%   Furthermore, note that for all 
%     $x=(x_1,x_2,x_3,x_4,x_5)\in\R^5$,
%     $i\in\{1,2,3,4,5\}$,
%     $k\in\N$
%   it holds that
% %   \beq
% %   \abs{x_i}^k
% %   \leq1+\abs{x_i}^{2n}
% %   \leq1+(\abs{x_1}^2+\dots+\abs{x_d}^2)^n
% %   =1+\norm{x}^{2n}.
% %   \eeq
%   \beq
%   \label{eq:diffmu2}
%     \abs{x_i}^k
%     =
%     (\abs{x_i}^2)^{\!\nicefrac k2}
%     \leq 
%     \bbp{\textstyle\sum_{j=1}^5 \abs{x_j}^2}^{\nicefrac k2}
%     = 
%     (\norm{x}^2)^{\nicefrac k2}
%     = 
%     \norm{x}^k
%     .
%   \eeq
  Throughout this proof
  let $c\in[0,\infty]$ satisfy
  \beq
  \label{eq:defc}
    c=\sup\nolimits_{t\in\R}\bbr{\max\{\abs{f(t)},\abs{g'(t)}\}}
  \eeq
  and assume w.l.o.g.\ that $c<\infty$.
  Observe that 
    the hypothesis that $p\geq 1$
    and the hypothesis that $q\geq 2pn\geq 2$
  show that for all 
    $x=(x_1,x_2,x_3,x_4,x_5)\in\R^5$ 
  it holds that
  \ba
    \norm{x}^2
    &=
    \abs{x_1}^2+\abs{x_2}^2+\abs{x_3}^2+\abs{x_4}^2+\abs{x_5}^2\\
    &\leq 
    \abs{x_1}^2+\bp{1+\abs{x_2}^q}+\bp{1+\abs{x_3}^q}+\abs{x_4}^2+\abs{x_5}^2\\
    &\leq 
    \bp{ 1+\abs{x_1}^2+\abs{x_4}^2+\abs{x_5}^2}^p+\abs{x_2}^q+\abs{x_3}^q+1
    =
    V(x)
    .
  \ea
    The fact that for all $x\in\R^5$ it holds that $V(x)\geq 1$
    hence
  ensures that
  \beq
  \label{eq:Vbnd}
    \norm x\leq [V(x)]^{1/2}
    \leq 
    V(x)
    .
  \eeq
  Furthermore, note that
    \eqref{eq:exist.defV}, 
    the triangle inequality,
    the hypothesis that
      $q\geq 2pn\geq 2$,
    and the fact that 
      for all
        $r\in(1,\infty)$,
        $f\in C(\R,\R)$,
        $x\in\R$
        with $f=(\R\ni y\mapsto \abs{y}^r\in\R)$
      it holds that
        $\abs{f'(x)}=r\abs{x}^{r-1}$
  imply that for all
    $x=(x_1,x_2,x_3,x_4,x_5)$,
    $v=(v_1,v_2,v_3,v_4,v_5)\in\R^5$
  it holds that
    $V\in C^1(\R^5,[0,\infty))$
  and
  \bm
    \abs{V'(x)v}
    \leq
    \babs{2p\bp{ 1+(x_1)^2+(x_4)^2+(x_5)^2 }^{p-1} (x_1v_1+x_4v_4+x_5v_5)}
      \\%&\qquad 
      + q\abs{x_2}^{q-1}\abs{v_2} + q\abs{x_3}^{q-1}\abs{v_3}
    .
  \em
  This proves that for all
    $x=(x_1,x_2,x_3,x_4,x_5)\in\R^5$,
    $z\in\R$
  it holds that
  \ba
  \label{eq:Vdiffbnd}
    &\abs{V'(x)\mu(x+\sigma z)}\\
    &=
    \abs{V'(x)\mu(x_1,x_2+z,x_3,x_4,x_5)}\\
    &= 
    \babs{V'(x)\bp{1,0,g'(x_1)(x_2+z),f(x_1)x_4x_5,f(x_1)((x_3)^n-(x_4)^2) }}\\
    &\leq
    \babs{2p\bp{ 1+(x_1)^2+(x_4)^2+(x_5)^2 }^{p-1}\bp{ x_1+f(x_1)(x_4)^2x_5+f(x_1)((x_3)^n-(x_4)^2)x_5 } } \\
      &\qquad + q\abs{x_3}^{q-1}\abs{g'(x_1)(x_2+z)}\\
    &\leq
    2p\babs{ 1+(x_1)^2+(x_4)^2+(x_5)^2 }^{p-1} \bp{\abs{x_1}+\abs{f(x_1)}\abs{x_3}^n\abs{x_5}}
    \\&\qquad
      + q\abs{x_3}^{q-1}\abs{g'(x_1)}(\abs{x_2}+\abs{z})
    .
  \ea
  Next observe that 
    \eqref{eq:exist.defV}
  ensures that for all
    $x=(x_1,x_2,x_3,x_4,x_5)\in\R^5$
  it holds that
  \beq
  \label{eq:Vdiffbnd1}
    \abs{x_1}\babs{ 1+(x_1)^2+(x_4)^2+(x_5)^2 }^{p-1}
    \leq 
    \babs{ 1+(x_1)^2+(x_4)^2+(x_5)^2 }^p\leq V(x)
    .
  \eeq
  In addition, note that
  \eqref{eq:exist.defV},
  \eqref{eq:defc},
    the fact that $\frac1{2p}+\frac{p-(1/2)}{p}=1$,
    the Young inequality, 
    and the hypothesis that $q\geq 2pn$
  demonstrate that for all
    $x=(x_1,x_2,x_3,x_4,x_5)\in\R^5$
  it holds that
  \ba
  \label{eq:Vdiffbnd2}
    &\abs{f(x_1)} \abs{x_3}^n\abs{x_5}\babs{1+(x_1)^2+(x_4)^2+(x_5)^2}^{p-1}\\
    &\leq 
    c\,\abs{x_3}^n \babs{(x_5)^2}^{1/2}\babs{1+(x_1)^2+(x_4)^2+(x_5)^2}^{p-1}\\
    &\leq 
    c\,\abs{x_3}^n \babs{1+(x_1)^2+(x_4)^2+(x_5)^2}^{p-1/2}\\
    &\leq 
    c\bbbbbr{ \frac{\abs{x_3}^{2pn}}{2p} + \frac{\babs{ 1+(x_1)^2+(x_4)^2+(x_5)^2}^{p}}{\frac{p}{p-\frac12}} }\\
    &\leq 
    c \bbr{ 1+\abs{x_3}^{q} + \babs{ 1+(x_1)^2+(x_4)^2+(x_5)^2 }^{p} }
    \leq 
    c V(x)
    .
  \ea
  Next observe that
    \eqref{eq:exist.defV},
    the fact that $\frac1q+\frac{q-1}q=1$,
    and the Young inequality
  show that for all
    $x=(x_1,x_2,x_3,x_4,x_5)\in\R^5$
  it holds that
  \ba
  \label{eq:Vdiffbnd3}
    \abs{x_3}^{q-1} \abs{g'(x_1)} \abs{x_2}
    &\leq 
    c\,\abs{x_2}\abs{x_3}^{q-1}
    \leq 
    c \bbbbr{ \frac{\abs{x_2}^q}q  + \frac{(\abs{x_3}^{q-1})^{\frac q{q-1}}}{\frac q{q-1}}}
    \\&\leq 
    c\bbr{\abs{x_2}^q+\abs{x_3}^q}
    \leq 
    cV(x)
    .
  \ea
  Moreover, note that 
    the fact that $\frac1q+\frac{q-1}q=1$
  implies that for all
    $x=(x_1,x_2,x_3,x_4,x_5)\in\R^5$,
    $z\in\R$
  it holds that
  \beq
    \abs{x_3}^{q-1} \abs{g'(x_1)} \abs{z}
    \leq 
    c\,\abs{z}\abs{x_3}^{q-1}
    \leq 
    c\,\abs z\bp{1+\abs{x_3}^q}
    \leq 
    c\,\abs zV(x)
    .
  \eeq
  Combining
    this, 
    \eqref{eq:Vdiffbnd1}, 
    \eqref{eq:Vdiffbnd2}, 
  and 
    \eqref{eq:Vdiffbnd3}
  with
    \eqref{eq:Vdiffbnd}
  proves that
  \ba
  \label{eq:Vdiffreq}
    V'(x)\mu(x+\sigma z)
    &\leq
    \abs{V'(x)\mu(x+\sigma z)}
    \leq 
    2p V(x)+2pcV(x)+qcV(x)+qc\,\abs{z} V(x)
    \\&=
    (2p+2pc)V(x)+qc(1+\abs{z})V(x)
    \leq 
    (2p+2pc+qc)(1+\abs{z})V(x)
    .
  \ea
    This
    and \eqref{eq:Vbnd}
  establish
    \eqref{eq:lemexistconc}.
  The proof of \cref{lem:exist} is thus completed.
\end{proof}

\subsection{On solutions to SDEs with irregularities in the initial value}

\begin{lemma}
\label{lem:bump}
  Let $a\in\R$, $b\in(a,\infty)$.
  Then there exists a function $f\in C^{\infty}(\R,[0,\infty))$ which satisfies that
    $\{t\in\R\colon f(t)> 0\}=(a,b)$
    and $\int_a^b \abs{f(t)}^2\,\diff t=1$.
\end{lemma}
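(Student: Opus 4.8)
The plan is to exhibit $f$ explicitly as a normalized product of a translated and a reflected copy of the classical non-analytic smooth function. First I would introduce the auxiliary function $\phi\colon\R\to[0,\infty)$ given by $\phi(t)=\exp(-1/t)$ for $t\in(0,\infty)$ and $\phi(t)=0$ for $t\in(-\infty,0]$, and recall the well-known fact that $\phi\in C^\infty(\R,[0,\infty))$ with $\{t\in\R\colon\phi(t)>0\}=(0,\infty)$. The only slightly delicate point here is the smoothness of $\phi$ at the origin; this follows from the standard induction showing that for each nonnegative integer $k$ there is a polynomial $P_k$ with $\phi^{(k)}(t)=P_k(1/t)\exp(-1/t)$ for $t\in(0,\infty)$, together with the fact that $\lim_{t\downarrow 0}t^{-m}\exp(-1/t)=0$ for every nonnegative integer $m$, which forces all one-sided derivatives of $\phi$ at $0$ to vanish and hence $\phi$ to be $C^\infty$ on all of $\R$.

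Next I would set $\psi\colon\R\to[0,\infty)$, $\psi(t)=\phi(t-a)\,\phi(b-t)$. Since $\phi\in C^\infty(\R,[0,\infty))$, the function $\psi$ is $C^\infty$ as a product of $C^\infty$ functions, and it is nonnegative. Moreover $\phi(t-a)>0$ holds precisely when $t>a$ and $\phi(b-t)>0$ holds precisely when $t<b$, so $\{t\in\R\colon\psi(t)>0\}=(a,b)$; in particular $\psi$ vanishes outside the compact interval $[a,b]$. Then I would define $\kappa=\int_a^b(\psi(t))^2\,\diff t$. This integral is finite because $\psi$ is continuous and therefore bounded on $[a,b]$, and it is strictly positive because $\psi$ is continuous and strictly positive at, for instance, the midpoint $(a+b)/2\in(a,b)$, so that $(\psi)^2$ is continuous, nonnegative, and not identically zero on $[a,b]$.

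Finally I would put $f=\kappa^{-1/2}\psi$. Then $f\in C^\infty(\R,[0,\infty))$, one has $\{t\in\R\colon f(t)>0\}=\{t\in\R\colon\psi(t)>0\}=(a,b)$, and $\int_a^b\abs{f(t)}^2\,\diff t=\kappa^{-1}\int_a^b(\psi(t))^2\,\diff t=1$, which is exactly the assertion of the lemma. I do not expect any real obstacle here: the entire argument is the textbook bump-function construction, and the only step requiring a careful (though entirely standard) justification is the $C^\infty$-regularity of $\phi$ at the point $0$.
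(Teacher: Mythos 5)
Your construction is correct and complete: the translated/reflected product $\psi(t)=\phi(t-a)\phi(b-t)$ of the classical flat function $\phi$, followed by $L^2$-normalization, yields exactly the required $f\in C^\infty(\R,[0,\infty))$, and you have correctly identified the smoothness of $\phi$ at the origin as the one point requiring a (standard) induction argument. The paper itself gives no proof of this lemma — it is stated without proof and referred to in the surrounding text as ``the well-known result on the existence of certain smooth bump functions'' — so there is nothing to compare against, but the argument you give is precisely the textbook construction the authors are invoking, and it settles the claim fully.
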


\begin{prop}
\label{prop:lem1}
  Let
  $d\in\{5,6,\dots\}$,
  $n\in\{2,3,\dots\}$,
  $T\in(0,\infty)$,
  $\tau\in(0,T)$,
  let $\norm\cdot\colon\R^d\to[0,\infty)$ be the standard norm on $\R^d$,
  let $(\Omega,\mc F,\PP)$ be a probability space,
  and let $W\colon[0,T]\times\Omega\to\R$ be a standard Brownian motion
    with continuous sample paths.
  Then there exist 
    $\mu\in C^\infty(\R^d,\R^d)$,
    $\sigma\in\R^d$,
    $V\in C^{\infty}(\R^d,[0,\infty))$,
    $\kappa\in[1,\infty)$
  such that
  \begin{enumerate}[label=(\roman{enumi}),ref=(\roman{enumi})]
    \item \label{it:lem1.1}
    it holds for all
      $x,h\in\R^d$, 
      $z\in\R$ 
    that
      $\norm{\mu'(x)h}\leq \kappa \bp{1+\norm x^\kappa} \norm h$,
      $V'(x)\mu(x+\sigma z)\leq \kappa(1+\abs{z})V(x)$,
      and $\norm x\leq V(x)$,
    \item \label{it:lem1.2}
      there exist unique stochastic processes
      $X^x\colon [0,T]\times\Omega\to\R^d$, $x\in\R^d$, with continuous sample paths
      such that for all 
        $x\in\R^d$,
        $t\in[0,T]$,
        $\omega\in\Omega$
      it holds that
      \beq
        X^x(t,\omega)=x+\int_0^t\mu(X^x(s,\omega))\,\diff s+\sigma W(t,\omega)
        ,
      \eeq
      and
    \item \label{it:lem1.3}
      it holds for all 
        $t\in(\tau,T)$ 
      that 
        there exists
          $c\in(0,\infty)$ 
        such that
          for all
            $\eps\in(0,\nicefrac1e]$,
            $h=(0,0,0,\eps,0,0,\dots,0)\in\R^d$,
            %$w\in\{y\in\R^d\colon( \exists\,r\in(0,1]\colon y=v+r\delta)\}$
          it holds that
          \beq
          \label{eq:lem1nonhoelder}
            \exp\bp{ -c\,\abs{\ln(\norm{h})}^{2/n} }
            =
            \exp\bp{ -c\,\abs{\ln(\eps)}^{2/n} }
            \leq 
            \bEE{ \norm{X^h(t)-X^0(t)} } 
            .
          \eeq
  \end{enumerate}
\end{prop}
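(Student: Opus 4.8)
The plan is to realize the example in $\R^d$ by embedding the five-dimensional system of \cref{prop:failhoelder} into the first five coordinates and letting the remaining $d-5$ coordinates evolve trivially. First, using \cref{lem:bump} with $(a,b)\is(0,\tau)$, fix $g\in C^\infty(\R,[0,\infty))$ with $\{t\in\R\colon g(t)>0\}=(0,\tau)$ and $\int_0^\tau\abs{g(t)}^2\,\diff t=1$, and, using \cref{lem:bump} with $(a,b)\is(\tau,T)$, fix $f\in C^\infty(\R,[0,\infty))$ with $\{t\in\R\colon f(t)>0\}=(\tau,T)$; since $f,g$ and their derivatives are bounded, the numbers $c_1=\sup_{t\in\R}\max\{\abs{f(t)},\abs{f'(t)},\abs{g'(t)},\abs{g''(t)}\}$ and $c_2=\sup_{t\in\R}\max\{\abs{f(t)},\abs{g'(t)}\}$ are finite. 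Then set $\sigma=(0,1,0,0,0,0,\dots,0)\in\R^d$, let $\mu\in C^\infty(\R^d,\R^d)$ be the map whose first five components are the components of the right-hand side of \eqref{eq:exist0.defmu} (which depend only on $x_1,\dots,x_5$) and whose remaining components vanish, let $V\in C^\infty(\R^d,[0,\infty))$ satisfy $V(x)=(1+(x_1)^2+(x_4)^2+(x_5)^2)+(x_2)^{2n}+(x_3)^{2n}+1+\sum_{i=6}^d(x_i)^2$ (this is \eqref{eq:exist.defV} with $p=1$ and $q=2n$, augmented by the inert coordinates; with $p=1$ and $q=2n$ the function $V$ is genuinely $C^\infty$ and the constraint $q\geq 2pn$ of \cref{lem:exist} holds), and choose $\kappa\in[1,\infty)$ with $\kappa\geq 8nc_1$, $\kappa\geq n$, and $\kappa\geq 2+(2+2n)c_2$.

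Next I would verify \cref{it:lem1.1}. Because the first five components of $\mu$ depend only on $(x_1,\dots,x_5)$ and the remaining components vanish, the matrix $\mu'(x)$ is the five-dimensional Jacobian embedded in the top-left block, so \cref{lem:exist0} (with $n\is n$ and $c\is c_1$) gives $\norm{\mu'(x)h}\leq 4nc_1(1+\norm{(x_1,\dots,x_5)}^n)\norm{(h_1,\dots,h_5)}\leq 4nc_1(1+\norm x^n)\norm h\leq\kappa(1+\norm x^\kappa)\norm h$; likewise, since $\mu(x+\sigma z)$ has vanishing components $6,\dots,d$ and the restriction of $V$ to the first five coordinates is exactly \eqref{eq:exist.defV}, \cref{lem:exist} gives $V'(x)\mu(x+\sigma z)\leq(2+(2+2n)c_2)(1+\abs z)V(x)\leq\kappa(1+\abs z)V(x)$. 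Finally, \cref{lem:exist} also yields $\norm{(x_1,\dots,x_5)}^2\leq V(x_1,\dots,x_5,0,\dots,0)$, hence $\norm x^2\leq V(x_1,\dots,x_5,0,\dots,0)+\sum_{i=6}^d(x_i)^2=V(x)$, and $V(x)\geq 1$ gives $\norm x\leq V(x)$.

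For \cref{it:lem1.2} I would pass to the random ordinary differential equation given by the standard additive-noise substitution: with $Y^x(t,\omega)=X^x(t,\omega)-\sigma W(t,\omega)$ the SDE is equivalent to $\tfrac{\diff}{\diff t}Y^x(t,\omega)=\mu(Y^x(t,\omega)+\sigma W(t,\omega))$, $Y^x(0,\omega)=x$, whose right-hand side is continuous in $t$ and (being $C^1$) locally Lipschitz in the state, so for every $\omega$ with continuous $W_\cdot(\omega)$ there is a unique maximal solution; the Lyapunov bound of \cref{it:lem1.1} together with Gronwall's inequality yields $V(Y^x(t,\omega))\leq V(x)\exp(\kappa\int_0^t(1+\abs{W(s,\omega)})\,\diff s)<\infty$ for $t\in[0,T]$, which rules out explosion, and measurability in $\omega$ follows by standard arguments; this produces the unique processes $X^x$ (alternatively one may cite the existence-and-uniqueness result of \cite{JentzenKuckuckMuellerGronbachYaroslavtseva2019}). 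For \cref{it:lem1.3} I would observe that, with $h=(0,0,0,\eps,0,\dots,0)$, the first five coordinates of $X^h$ and of $X^0$ satisfy precisely the five-dimensional equation \eqref{eq:Xeq} of \cref{prop:failhoelder} with initial values $\eps v$ and $0$, while coordinates $6,\dots,d$ of $X^h$ and $X^0$ are identically zero; hence $\norm{X^h(t)-X^0(t)}\geq\abs{X^h_4(t)-X^0_4(t)}$, and \cref{prop:failhoelder} (with the same $n$) provides, for each $t\in(\tau,T)$, a constant $c\in(0,\infty)$ with $\bEE{\norm{X^h(t)-X^0(t)}}\geq\bEE{\abs{X^h_4(t)-X^0_4(t)}}\geq\exp(-c\,\abs{\ln(\eps)}^{2/n})$ for all $\eps\in(0,\nicefrac1e]$; since $\norm h=\eps$ this is \eqref{eq:lem1nonhoelder}.

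The work here is essentially bookkeeping rather than a new idea, since \cref{prop:failhoelder} already supplies the non-H\"older lower bound; the step requiring the most care is the simultaneous juggling of the parameters $p,q,\kappa$ so that $V$ is $C^\infty$, satisfies the Lyapunov inequality of \cref{lem:exist}, and dominates the norm, together with the dimension lifting $\R^5\hookrightarrow\R^d$, which must be arranged so that the polynomial-growth and Lyapunov estimates survive with respect to the $d$-dimensional norm and so that the $d-5$ added coordinates contribute nothing to $X^h-X^0$.
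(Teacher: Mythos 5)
Your proposal is correct and follows essentially the same route as the paper: embed the five-dimensional drift $\nu$ of \cref{prop:failhoelder} in the first five coordinates, let the remaining $d-5$ coordinates evolve trivially, take $V$ to be the Lyapunov function of \cref{lem:exist} with $p=1$, $q=2n$ augmented by $\sum_{i=6}^d (x_i)^2$, and invoke \cref{lem:exist0}, \cref{lem:exist}, \cite[Lemma~5.4]{JentzenKuckuckMuellerGronbachYaroslavtseva2019}, and \cref{prop:failhoelder}. The only cosmetic differences are that the paper introduces an auxiliary five-dimensional solution process and an explicit lifted process and checks directly that they coincide with the projection of $X^x$, whereas you argue the projection property in place, and that you additionally sketch the random-ODE/Gronwall existence argument rather than only citing the existence lemma.
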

\begin{proof}[Proof of \cref{prop:lem1}]
  \newcommand{\vproj}{\varpi}
  \newcommand{\vprojj}{\mathbf p}
  Throughout this proof
    let $\nnorm\cdot\colon\R^5\to[0,\infty)$
      be the standard norm on $\R^5$,
    let $f,g\in C^\infty(\R,[0,\infty))$
      satisfy
        $\{t\in\R\colon f(t)> 0\}=(\tau,T)$,
        $\{t\in\R\colon g(t)> 0\}=(0,\tau)$,
        and $\int_0^\tau\abs{g(t)}^2=1$,
  %let $\norm{\cdot}_k\colon\R^k\to[0,\infty)$, $k\in\N$, be the family of Euclidean norms,\T{check}
  let $\rho=(0,1,0,0,0)\in\R^5$,
  let $\sigma=(0,1,0,0,\dots,0)\in\R^d$,
  % let $\iota_1\colon\R^5\to\R^d$, $\iota_2\colon\R^{d-5}\to\R^d$ be the functions
  %   which satisfy for all
  %     $x=(x_1,x_2,x_3,x_4,x_5)\in\R^5$,
  %     $y=(y_1,y_2,\dots,y_{d-5})\in\R^{d-5}$
  %   that
  %   \beq
  %     \iota_1(x)=(x_1,x_2,x_3,x_4,x_5,0,\dots,0)
  %     \qqandqq
  %     \iota_2(y)=(0,\dots,0,y_1,y_2,\dots,y_{d-5})
  %     .
  %   \eeq
  let $C,\kappa\in[0,\infty)$ satisfy
    \begin{equation}
      C=\sup\nolimits_{t\in\R}\bbr{\max\{1,\abs{f(t)},\abs{f'(t)},\abs{g'(t)},\abs{g''(t)}\}}
      \qqandqq
      \kappa=2+8(n+1)C
      ,
    \end{equation}
  let $\vproj\colon\R^d\to\R^5$ satisfy
    for all
      $x=(x_1,x_2,\dots,x_d)\in\R^d$
    that
      $\vproj(x)=(x_1,x_2,x_3,x_4,x_5)$,
  % let $\vprojj_i\colon \R^d\to\R$, $i\in\{1,2,\dots,d\}$,
  %   satisfy for all
  %     $i\in\{1,2,\dots,d\}$,
  %     $x=(x_1,x_2,\dots,x_d)$
  %   that
  %     $\vprojj_i(x)=x_i$,
  let $U\colon\R^5\to[0,\infty)$ 
    \intrtype{be the function which satisfies }%
    \intrtypen{satisfy }%
    for all
      $x=(x_1,x_2,x_3,x_4,x_5)\in\R^5$ 
    that
    \beq
    \label{eq:lem1.defU}
      U(x)= 1+(x_1)^2+(x_4)^2+(x_5)^2 + (x_2)^{2n} + (x_3)^{2n} + 1
      ,
    \eeq
  let $V\colon\R^d\to[0,\infty)$ 
    satisfy for all
      $x\in\R^d$
    that
    \beq
      \label{eq:lem1.defV}
      V(x)
      =
      U(\varpi(x))+\bbbbbr{\sum_{i\in\N\cap(5,d+1)} (x_i)^2}+1
      ,
    \eeq
  % let $V\colon\R^5\times\R^{d-5}\to[0,\infty)$ 
  %   satisfy for all
  %     $x\in\R^5$,
  %     $y\in\R^{d-5}$
  %   that
  %   \beq
  %     V(x,y)=U(x)+\norm{y}_{d-5}
  %     ,
  %   \eeq
  let $\nu\colon\R^5\to\R^5$ satisfy for all
    $x=(x_1,x_2,x_3,x_4,x_5)\in\R^5$
  that
  \beq
  \label{eq:lem1.defnu}
    \nu(x)=\bp{ 1,0,g'(x_1)x_2,f(x_1)x_4x_5,f(x_1)\bbr{(x_3)^n-(x_4)^2} },
  \eeq
  and let $\mu\colon\R^d\to\R^d$ 
    satisfy for all
      $x,\,y=(y_1,y_2,\dots,y_d)\in\R^d$
    with $\mu(x)=y$
    that
    \beq
    \label{eq:lem1.defmu}
      \vproj(y)=\nu(\vproj(x))
      \qquad\text{and}\qquad
      \forall\,i\in\N\cap(5,d+1)\colon y_i=0
    \eeq
  (cf.\ \cref{lem:bump}).
  Observe that
    \eqref{eq:lem1.defnu},
    the fact that $f,g\in C^{\infty}(\R,[0,\infty))$,
    and \cref{lem:exist0}
    (with 
      $n\is n$,
      $\norm\cdot\is\nnorm\cdot$,
      $\mu\is\nu$,
      $f\is f$,
      $g\is g$,
      $c\is C$
    in the notation of \cref{lem:exist0})
  establish that for all
    $x,h\in\R^5$
  it holds that
    $\nu\in C^\infty(\R^5,\R^5)$
  and
  \begin{equation}
    \begin{split}
    \nnorm{\nu'(x)h}
    &\leq
    4nC(1+\nnorm x^n)\,\nnorm h
    \leq
    4nC(2+\nnorm x^\kappa)\,\nnorm h
    \\&\leq
    8nC(1+\nnorm x^\kappa)\,\nnorm h
    \leq
    \kappa(1+\nnorm x^\kappa)\,\nnorm h
    .
    \end{split}
  \end{equation}
    This
    and \eqref{eq:lem1.defmu}
  show that for all 
    $x,h\in\R^d$
  it holds that
    $\mu\in C^\infty(\R^d,\R^d)$
    and 
  \begin{equation}
    \label{eq:lem1.mu}
    \begin{split}
    \norm{\mu'(x)h}
    &=
    \nnorm*{\bbr{\nu'(\vproj(x))}\vproj(h)}
    \leq
    \kappa(1+\nnorm{\vproj(x)}^\kappa)\,\nnorm{\vproj(h)}
    \leq
    \kappa(1+\norm{x}^\kappa)\,\norm{h}
    .
    \end{split}
  \end{equation}
  In the next step we note that
    \eqref{eq:lem1.defU},
    the fact that $f,g\in C^{\infty}(\R,[0,\infty))$,
    and \cref{lem:exist}
    (with
      $n\is n$,
      $p\is 1$,
      $q\is 2n$,
      $f\is f$,
      $g\is g$,
      $\sigma\is\rho$,
      $\norm\cdot\is\nnorm\cdot$,
      $\mu\is\nu$,
      $V\is U$
    in the notation of \cref{lem:exist})
  prove that for all
    $x,h\in\R^5$,
    $z\in\R$
  it holds that
  \begin{equation}
    \label{eq:lem1.Uprops}
    U\in C^\infty(\R^5,[0,\infty)),\;\,
    \nnorm x\leq U(x),
    \,\;\text{and}\;\,
    U'(x)\nu(x+\rho z)
    \leq
    2(1+C+nC)(1+\abs z)U(x)
    .
  \end{equation}
  Combining
    this
  with
    \eqref{eq:lem1.defV}
  demonstrates for all
    $x=(x_1,x_2,\dots,x_d)\in\R^d$
  that
  \begin{equation}
    \label{eq:lem1.V1}
    \begin{split}
    \norm{x}
    &=
    \bbbbbr{
      \nnorm{\vproj(x)}^2
      +
      \bbbbp{\sum_{i\in\N\cap(5,d+1)} (x_i)^2}
    }^{\frac12}
    \leq
    \nnorm{\vproj(x)}+\bbbbbr{\sum_{i\in\N\cap(5,d+1)} (x_i)^2}^{\frac12}
    \\&\leq
    U(\vproj(x))+\max\Biggl\{1,\bbbbbr{\sum_{i\in\N\cap(5,d+1)} (x_i)^2}\Biggr\}
    \leq
    V(x)
    .
    \end{split}
  \end{equation}
  Moreover, observe that
    \eqref{eq:lem1.defV}
    and \eqref{eq:lem1.Uprops}
  imply that for all
    $x=(x_1,x_2,\dots,x_d),\,\allowbreak h=(h_1,h_2,\dots,\allowbreak h_d)\in\R^d$
  it holds that $V\in C^{\infty}(\R^d,[0,\infty))$ and
  \begin{equation}
    V'(x)h
    =
    \bbr{U'(\vproj(x))}\vproj(h)+\bbbbbr{\sum_{i\in\N\cap(5,d+1)}(2x_ih_i)}
    .
  \end{equation}
    This,
    \eqref{eq:lem1.defmu},
    and \eqref{eq:lem1.Uprops}
  ensure that for all
    $x\in\R^d$,
    $z\in\R$
  it holds that
  \begin{equation}
    \label{eq:lem1.Vprop}
    \begin{split}
      &V'(x)\mu(x+\sigma z)
      =
      U'(\varpi(x))\vproj(\mu(x+\sigma z))
      =
      U'(\vproj(x))\nu\bp{\vproj(x)+\rho z}
      \\&\leq
      2(1+C+nC)(1+\abs z)U(\vproj(x))
      \leq
      2(1+C+nC)(1+\abs z)V(x)
      \\&\leq
      \kappa(1+\abs z)V(x)
      .
    \end{split}
  \end{equation}
    This,
    \eqref{eq:lem1.mu},
    and \eqref{eq:lem1.V1}
  establish \cref{it:lem1.1}.
  Next we combine
    \eqref{eq:lem1.Uprops}
    and \cite[Lemma~5.4]{JentzenKuckuckMuellerGronbachYaroslavtseva2019}
    (with
      $d\is 5$,
      $m\is 1$,
      $T\is T$,
      $\mu\is\nu$,
      $\sigma\is\rho$,
      $\varphi\is (\R\ni z\mapsto 2(1+C+nC)(1+\abs z)\in[0,\infty))$,
      $V\is U$,
      $\norm\cdot\is\nnorm\cdot$,
      $(\Omega,\mc F,\PP)\is(\Omega,\mc F,\PP)$,
      $W\is W$
    in the notation of \cite[Lemma~5.4]{JentzenKuckuckMuellerGronbachYaroslavtseva2019})
  to obtain that there exist unique stochastic processes 
    $Y^x\colon[0,T]\times\Omega\to\R^5$, $x\in\R^5$,
    with continuous sample paths
  which satisfy for all
    $x\in\R^5$,
    $t\in[0,T]$,
    $\omega\in\Omega$
  that
  \beq
  \label{eq:lem1.defY}
    Y^x(t,\omega)=x+\int_0^t\nu(Y^x(s,\omega))\,\diff s+\rho W(t,\omega)
    .
  \eeq
  In addition, note that
    \eqref{eq:lem1.V1},
    \eqref{eq:lem1.Vprop},
    and \cite[Lemma~5.4]{JentzenKuckuckMuellerGronbachYaroslavtseva2019}
    (with
      $d\is d$,
      $m\is 1$,
      $T\is T$,
      $\mu\is\mu$,
      $\sigma\is\sigma$,
      $\varphi\is (\R\ni z\mapsto \kappa(1+\abs z)\in[0,\infty))$,
      $V\is V$,
      $\norm\cdot\is\norm\cdot$,
      $(\Omega,\mc F,\PP)\is(\Omega,\mc F,\PP)$,
      $W\is W$
    in the notation of \cite[Lemma~5.4]{JentzenKuckuckMuellerGronbachYaroslavtseva2019})
  ensure that there exist unique stochastic processes 
    $X^x\colon[0,T]\times\Omega\to\R^d$, $x\in\R^d$,
    with continuous sample paths
  which satisfy for all
    $x\in\R^d$,
    $t\in[0,T]$,
    $\omega\in\Omega$
  that
  \beq
  \label{eq:lem1.defX}
    X^x(t,\omega)=x+\int_0^t\mu(X^x(s,\omega))\,\diff s+\sigma W(t,\omega)
    .
  \eeq
  This proves \cref{it:lem1.2}.
  In the next step let $Z^x\colon [0,T]\times\Omega\to\R^d$, $x\in\R^d$, satisfy for all
    $x=(x_1,x_2,\dots,x_d),\,y=(y_1,y_2,\dots,y_d)\in\R^d$,
    $t\in[0,T]$,
    $\omega\in\Omega$
    with $Z^x(t,\omega)=y$
  that
  \beq
  \label{eq:lem1.defZ}
    \vproj(y)=Y^{\vproj(x)}(t,\omega)
    \qqandqq
    \forall\, i\in\N\cap(5,d+1)\colon y_i=x_i
    .
  \eeq
  Observe that
    \eqref{eq:lem1.defmu}
    and \eqref{eq:lem1.defY}
  demonstrate that for all
    $x\in\R^d$,
    $t\in[0,T]$,
    $\omega\in\Omega$
  it holds that
  \begin{equation}
    \begin{split}
    \vproj(Z^x(t,\omega))
    &=
    Y^{\vproj(x)}(t,\omega)
    =
    \vproj(x)+\int_0^t \nu(Y^{\vproj(x)}(s,\omega))\,\diff s+\rho W(t,\omega)
    \\&=
    \vproj(x)+\int_0^t \nu\bp{\vproj(Z^{x}(s,\omega))}\,\diff s+\rho W(t,\omega)
    \\&=
    \vproj(x)+\int_0^t \vproj\bp{\mu(Z^{x}(s,\omega))}\,\diff s+\rho W(t,\omega)
    \\&=
    \vproj(x)+\vproj\bbbp{\int_0^t \mu(Z^{x}(s,\omega))\,\diff s}+\vproj(\sigma) W(t,\omega)
    \\&=
    \vproj\bbbp{x+\int_0^t \mu(Z^{x}(s,\omega))\,\diff s+\sigma W(t,\omega)}
    .
    \end{split}
  \end{equation}
    This
    and the fact that 
      for all
        $x\in\R^d$,
        $t\in[0,T]$,
        $\omega\in\Omega$,
        $y,z\in\R^d$,
        $i\in\N\cap(5,d+1)$
        with
          $y=\int_0^t\mu(Z^{x}(s,\omega))\,\diff s$
          and $z=\rho W(t,\omega)$
      it holds that $y_i=0=z_i$
  establishes that for all
    $x\in\R^d$,
    $t\in[0,T]$,
    $\omega\in\Omega$
  it holds that
  \begin{equation}
    Z^x(t,\omega)
    =
    x+\int_0^t\mu(Z^x(s,\omega))\,\diff s+\sigma W(t,\omega)
    .
  \end{equation}
  Combining
    this
  with
    \eqref{eq:lem1.defX}
  shows that for all
    $x\in\R^d$,
    $t\in[0,T]$,
    $\omega\in\Omega$
  it holds that
  \begin{equation}
    \label{eq:lem1.eqXZ}
    X^x(t,\omega)
    =
    Z^x(t,\omega)
    .
  \end{equation}
  Next note that 
    \eqref{eq:lem1.defY}
    and \cref{prop:failhoelder}
    (with
      $T\is T$,
      $\tau\is\tau$,
      $n\is n$,
      $f\is f$,
      $g\is g$,
      $\sigma\is\rho$,
      $\norm\cdot\is\nnorm\cdot$,
      $\mu\is\nu$,
      $(\Omega,\mc F,\PP)\is(\Omega,\mc F,\PP)$,
      $W\is W$,
      $(X^x)_{x\in\R^5}\is(Y^x)_{x\in\R^5}$
    in the notation of \cref{prop:failhoelder})
  prove that for all
    $t\in(\tau,T)$
  there exists
    $c\in(0,\infty)$
  such that for all
    $\eps\in(0,\nicefrac1e]$,
    $h=(0,0,0,\eps,0)\in\R^5$
  it holds that
  \ba
  \label{eq:lem1.failhoelder5}
    \exp(-c\,\abs{\ln(\eps)}^{2/n})
    &\leq
    \bEE{\nnorm{Y^h(t)-Y^0(t)}}
    .
  \ea
  Note that
    \eqref{eq:lem1.defZ}
  implies that for all
    $x=(x_1,x_2,\dots,x_d),\,y=(y_1,y_2,\dots,y_d)\in\R^d$,
    $t\in[0,T]$,
    $\omega\in\Omega$
  it holds that
  \begin{equation}
    \begin{split}
    &\norm{Z^x(t,\omega)-Z^y(t,\omega)}
    \\&=
    \bbbbbr{
      \nnorm{
      Y^{\vproj(x)}(t,\omega)
      -
      Y^{\vproj(y)}(t,\omega)
      }^2
      +
      \bbbbp{
      \sum_{i\in\N\cap(5,d+1)}
        (x_i-y_i)^2
      }
    }^{\frac12}
    \\&\geq
    \nnorm{
    Y^{\vproj(x)}(t,\omega)
    -
    Y^{\vproj(y)}(t,\omega)
    }
    .
    \end{split}
  \end{equation}
    This,
    \eqref{eq:lem1.eqXZ}
    and \eqref{eq:lem1.failhoelder5}
  demonstrate that for all
    $t\in(\tau,T)$
  there exists
    $c\in(0,\infty)$
  such that for all
    $\eps\in(0,\nicefrac1e]$,
    $h=(0,0,0,\eps,0,0,\dots,0)\in\R^d$
  it holds that
  \begin{equation}
    \begin{split}
    &\exp\bp{-c\,\abs{\ln(\eps)}^{2/n}}
    \leq
    \bEE{\norm{Y^{\vproj(h)}(t)-X^0(t)}}
    \\&\leq
    \bEE{\norm{Z^h(t)-Z^0(t)}}
    =
    \bEE{\norm{X^h(t)-X^0(t)}}
    .
    \end{split}
  \end{equation}
  This establishes \cref{it:lem1.3}.
  The proof of \cref{prop:lem1} is thus completed.
\end{proof}

\section{On the existence of solutions to SDEs with non-locally H\"older continuous
dependence on the initial values}

\label{sec:general}

In this section we establish in
\cref{thm:final} below 
the existence of solutions to certain additive noise driven SDEs 
whose solutions depend 
non-locally H\"older continuously on their initial values and whose
drift coefficient functions are smooth with at most polynomially growing 
derivatives.
In our proof of \cref{thm:final} we employ
\cref{lem:lem2} below as well as
the elementary estimate for certain real-valued functions in \cref{lem:hoeldercomp} below.
\cref{lem:lem2} is a strengthened version of \cref{prop:lem1} above.
Our proof of \cref{lem:lem2} employs the essentially well-known
fact
on affine linear transformations of solutions to SDEs in
\cref{lem:transformsolution} below. 
For completeness we also provide in
this section a detailed proof of \cref{lem:transformsolution}.

\subsection{On affine linear transformations of SDEs}

\begin{lemma}
  \label{lem:transformsolution}
  Let $d,m\in\N$,
  $T\in(0,\infty)$,
  $\nu\in C^1(\R^d,\R^d)$,
  $\rho\in\R^{d\times m}$,
  $v\in\R^d$,
  let $(\Omega,\mc F,\PP)$ be a probability space,
  let $W\colon [0,T]\times\Omega\to \R^m$ and
    $Y^x\colon [0,T]\times\Omega\to\R^d$, $x\in\R^d$, 
    be stochastic processes with continuous sample paths,
  assume for all
    $x\in\R^d$,
    $t\in[0,T]$,
    $\omega\in\Omega$
  that
  \begin{equation}
    Y^x(t,\omega)
    =
    x+\int_0^t \nu(Y^x(s,\omega))\,\diff s+\rho W(t,\omega),
  \end{equation}
  let $L\in\R^{d\times d}$ 
    be an invertible matrix,
  let $\mu\colon \R^d\to\R^d$ 
    satisfy for all
      $x\in\R^d$
    that
      $\mu(x)=L\nu(L^{-1}(x-v))$,
  and let $X^x\colon[0,T]\times\Omega\to\R^d$, $x\in\R^d$,
    satisfy for all
      $x\in\R^d$,
      $t\in[0,T]$,
      $\omega\in\Omega$
    that
    \begin{equation}
      \label{eq:transformsolution.defX}
      X^x(t,\omega)
      =
      LY^{L^{-1}(x-v)}(t,\omega)+v
      .
    \end{equation}
  Then it holds for all
    $x\in\R^d$,
    $t\in[0,T]$,
    $\omega\in\Omega$
  that
  \begin{equation}
    X^x(t,\omega)
    =
    x+\int_0^t\mu(X^x(s,\omega))\,\diff s+L\rho W(t,\omega)
    .
  \end{equation}
\end{lemma}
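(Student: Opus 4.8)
The plan is to verify the stochastic integral equation for $X^x$ by direct substitution, using the defining relation \eqref{eq:transformsolution.defX} together with the integral equation for $Y$ and the change-of-variables definition $\mu(x)=L\nu(L^{-1}(x-v))$. First I would fix $x\in\R^d$, $t\in[0,T]$, $\omega\in\Omega$ and abbreviate $y=L^{-1}(x-v)$, so that by hypothesis $Y^y(t,\omega)=y+\int_0^t\nu(Y^y(s,\omega))\,\diff s+\rho W(t,\omega)$. Applying the linear map $u\mapsto Lu+v$ to both sides of this identity and using linearity of $L$ to pull it inside the integral gives
\begin{equation}
  LY^y(t,\omega)+v
  =
  Ly+v+\int_0^t L\nu(Y^y(s,\omega))\,\diff s+L\rho W(t,\omega).
\end{equation}
By \eqref{eq:transformsolution.defX} the left-hand side is exactly $X^x(t,\omega)$, and $Ly+v=L(L^{-1}(x-v))+v=x$, so it remains only to rewrite the integrand $L\nu(Y^y(s,\omega))$ in terms of $X^x$.

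For that rewriting step I would again invoke \eqref{eq:transformsolution.defX}, now at time $s$: since $X^x(s,\omega)=LY^y(s,\omega)+v$, we have $Y^y(s,\omega)=L^{-1}(X^x(s,\omega)-v)$, and therefore
\begin{equation}
  L\nu(Y^y(s,\omega))
  =
  L\nu\bp{L^{-1}(X^x(s,\omega)-v)}
  =
  \mu(X^x(s,\omega)),
\end{equation}
where the last equality is just the definition of $\mu$. Substituting this into the displayed equation above yields $X^x(t,\omega)=x+\int_0^t\mu(X^x(s,\omega))\,\diff s+L\rho W(t,\omega)$, which is the claim.

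I do not expect any real obstacle here: the statement is the standard "affine change of variables commutes with additive-noise SDEs" fact, and the only points requiring a word of care are that $L$ being a genuine linear map (not merely affine) lets it pass through the Lebesgue integral $\int_0^t(\cdot)\,\diff s$ pathwise, and that the invertibility of $L$ is what makes $y=L^{-1}(x-v)$ well defined so that the substitution $Y^y(s,\omega)=L^{-1}(X^x(s,\omega)-v)$ is legitimate. No It\^o calculus or measurability subtleties enter, since everything is an $\omega$-wise identity between continuous paths and the noise term $\rho W$ is simply multiplied by $L$ on the left. If anything, the mild bookkeeping nuisance is keeping the roles of $x$, $v$, $y=L^{-1}(x-v)$, and $L^{-1}(x-v)$ straight in the notation, which the substitution above handles cleanly.
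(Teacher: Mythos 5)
Your proof is correct and follows essentially the same approach as the paper's: apply the affine map $u\mapsto Lu+v$ to the defining integral equation for $Y^{L^{-1}(x-v)}$, pull $L$ through the Lebesgue integral pathwise, and rewrite the integrand $L\nu(Y^{L^{-1}(x-v)}(s,\omega))$ as $\mu(X^x(s,\omega))$ using \eqref{eq:transformsolution.defX} and the definition of $\mu$. The paper merely performs the integrand rewrite first and then substitutes into the $Y$-equation, whereas you do it in the opposite order; this is an immaterial difference.
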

\begin{proof}[Proof of \cref{lem:transformsolution}]
  Note that 
    \eqref{eq:transformsolution.defX}
  ensures that for all
    $x\in\R^d$,
    $s\in[0,T]$,
    $\omega\in\Omega$
  it holds that
  \begin{equation}
    \begin{split}
    \mu(X^x(s,\omega))
    &=
    L\nu(L^{-1}(X^x(s,\omega)-v))
    \\&=
    L\nu\bp{L^{-1}([LY^{L^{-1}(x-v)}(s,\omega)+v]-v)}
    \\&=
    L\nu\bp{L^{-1}LY^{L^{-1}(x-v)}(s,\omega)}
    \\&=
    L\nu\bp{Y^{L^{-1}(x-v)}(s,\omega)}
    .
    \end{split}
  \end{equation}
  Therefore, it holds for all
    $x\in\R^d$,
    $t\in[0,T]$,
    $\omega\in\Omega$
  that
  \begin{equation}
    \begin{split}
    X^x(t,\omega)
    &=
    LY^{L^{-1}(x-v)}(t,\omega)+v
    \\&=
    L\bbbbr{
      L^{-1}(x-v)+\int_0^t\nu(Y^{L^{-1}(x-v)}(s,\omega))\,\diff s+\rho W(t,\omega)
    }+v
    \\&=
    [x-v]+L\bbbbr{\int_0^t\nu(Y^{L^{-1}(x-v)}(s,\omega))\,\diff s}+L\rho W(t,\omega)+v
    \\&=
    x+\int_0^t L\nu(Y^{L^{-1}(x-v)}(s,\omega))\,\diff s+L\rho W(t,\omega)
    \\&=
    x+\int_0^t \mu(X^x(s,\omega))\,\diff s+L\rho W(t,\omega)
    .
    \end{split}
  \end{equation}
  This completes the proof of \cref{lem:transformsolution}.
\end{proof}

\subsection{On solutions to SDEs with irregularities in the initial value}

\begin{lemma}
\label{lem:lem2}
  Let $d\in\{5,6,\dots\}$,
  $n\in\{2,3,\dots\}$,
  $T\in(0,\infty)$,
  $\tau\in(0,T)$,
  $v\in\R^d$,
  $\delta\in\R^d\setminus\{0\}$,
  let $\norm\cdot\colon\R^d\to[0,\infty)$ be the standard norm on $\R^d$,
  let $(\Omega,\mc F,\PP)$ be a probability space,
  and let $W\colon [0,T]\times\Omega\to\R$ be a standard Brownian motion
    with continuous sample paths.
  Then there exist 
    $\mu\in C^\infty(\R^d,\R^d)$,
    $\sigma\in\R^d$,
    $V\in C^\infty(\R^d,[0,\infty))$,
    $\kappa\in(0,\infty)$
  such that
  \begin{enumerate}[label=(\roman{enumi}),ref=(\roman{enumi})]
    \item \label{it:lem2.1}
      it holds for all
        $x,h\in\R^d$, 
        $z\in\R$ 
      that
        $\norm{\mu'(x)h}\leq \kappa \bp{1+\norm x^\kappa} \norm h$,
        $V'(x)\mu(x+\sigma z)\leq \kappa(1+\abs{z})V(x)$,
        and $\norm x\leq V(x)$,
    \item \label{it:lem2.2}
      there exist unique stochastic processes
      $X^x\colon [0,T]\times\Omega\to\R^d$, $x\in\R^d$, with continuous sample paths
      such that for all 
        $x\in\R^d$,
        $t\in[0,T]$,
        $\omega\in\Omega$
      it holds that
      \beq
        X^x(t,\omega)=x+\int_0^t\mu(X^x(s,\omega))\,\diff s+\sigma W(t,\omega)
        ,
      \eeq
      and
    \item \label{it:lem2.3}
      it holds for all 
        $t\in(\tau,T)$ 
      that 
        there exists
          $c\in(0,\infty)$ 
        such that
          for all
            $w\in\{v+r\delta\colon r\in(0,\nicefrac1e]\}$
          it holds that
          \beq
            \norm\delta\exp\bp{ -c\,\abs{\ln(\norm{v-w})}^{2/n} }
            \leq 
            \bEE{ \norm{X^v(t)-X^w(t)} } 
            .
          \eeq
  \end{enumerate}
\end{lemma}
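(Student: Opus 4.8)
The plan is to reduce the assertion to \cref{prop:lem1} by an affine--linear change of coordinates, using \cref{lem:transformsolution}. First I would apply \cref{prop:lem1} (with the given $d$, $n$, $T$, $\tau$, $\norm\cdot$, $(\Omega,\mc F,\PP)$, $W$) to obtain $\nu\in C^\infty(\R^d,\R^d)$, $\rho\in\R^d$, $U\in C^\infty(\R^d,[0,\infty))$, $\kappa_0\in[1,\infty)$ and unique continuous processes $Y^x\colon[0,T]\times\Omega\to\R^d$, $x\in\R^d$, with $Y^x(t,\omega)=x+\int_0^t\nu(Y^x(s,\omega))\,\diff s+\rho W(t,\omega)$ which satisfy $\norm{\nu'(x)h}\leq\kappa_0(1+\norm x^{\kappa_0})\norm h$, $U'(x)\nu(x+\rho z)\leq\kappa_0(1+\abs z)U(x)$, $\norm x\leq U(x)$ for all $x,h\in\R^d$, $z\in\R$, and for which, writing $\mathbf e=(0,0,0,1,0,\dots,0)\in\R^d$, for every $t\in(\tau,T)$ there is $c_0\in(0,\infty)$ with $\exp(-c_0\abs{\ln(\eps)}^{2/n})\leq\bEE{\norm{Y^{\eps\mathbf e}(t)-Y^0(t)}}$ for all $\eps\in(0,\nicefrac1e]$. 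Next I would fix an invertible matrix $L\in\R^{d\times d}$ with $L\mathbf e=\delta$ of the special form $L=\norm\delta\,Q$, where $Q\in\R^{d\times d}$ is orthogonal with $Q\mathbf e=\norm\delta^{-1}\delta$ (such $Q$ exists since $\norm\delta^{-1}\delta$ is a unit vector and extends to an orthonormal basis of $\R^d$); then $\norm{Lx}=\norm\delta\norm x$ for all $x\in\R^d$ and $L^{-1}\delta=\mathbf e$.

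Then I would define $\mu(x)=L\nu(L^{-1}(x-v))$, $\sigma=L\rho\in\R^d$, $V(x)=\opnorm{L}\,U(L^{-1}(x-v))+\norm v+1$, and take $\kappa\in(0,\infty)$ large (fixed below). For \cref{it:lem2.1}: since $L$ is invertible and $\nu\in C^\infty$, it holds that $\mu\in C^\infty(\R^d,\R^d)$ with $\mu'(x)=L\nu'(L^{-1}(x-v))L^{-1}$, so combining the polynomial bound for $\nu'$ with $\norm{L^{-1}(x-v)}\leq\opnorm{L^{-1}}(\norm x+\norm v)$ and the crude inequality $(\norm x+\norm v)^{\kappa_0}\leq 2^{\kappa_0}(1+\norm v)^{\kappa_0}(1+\norm x^{\kappa_0})$ yields $\norm{\mu'(x)h}\leq\kappa(1+\norm x^\kappa)\norm h$ once $\kappa$ is large; moreover $V\in C^\infty(\R^d,[0,\infty))$, and since $L^{-1}\sigma=\rho$ we have $\mu(x+\sigma z)=L\nu(L^{-1}(x-v)+\rho z)$, whence $V'(x)\mu(x+\sigma z)=\opnorm{L}\,U'(L^{-1}(x-v))\,\nu(L^{-1}(x-v)+\rho z)\leq\opnorm{L}\,\kappa_0(1+\abs z)\,U(L^{-1}(x-v))\leq\kappa(1+\abs z)V(x)$, and finally $\norm x\leq\norm{x-v}+\norm v\leq\opnorm{L}\norm{L^{-1}(x-v)}+\norm v\leq\opnorm{L}\,U(L^{-1}(x-v))+\norm v\leq V(x)$. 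For \cref{it:lem2.2}: applying \cref{lem:transformsolution} (with $m\is 1$, $\nu\is\nu$, $\rho\is\rho$, $v\is v$, $W\is W$, $(Y^x)_{x\in\R^d}\is(Y^x)_{x\in\R^d}$, $L\is L$, $\mu\is\mu$, and $X^x(t,\omega)\is LY^{L^{-1}(x-v)}(t,\omega)+v$) shows that the processes $X^x(t,\omega):=LY^{L^{-1}(x-v)}(t,\omega)+v$ solve the SDE with drift $\mu$ and noise $\sigma W$, and uniqueness of such processes follows from the uniqueness of the $Y^x$ via the invertibility of the affine map $y\mapsto Ly+v$.

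Finally, for \cref{it:lem2.3}, fix $t\in(\tau,T)$ and let $c_0$ be as above. For $w=v+r\delta$ with $r\in(0,\nicefrac1e]$ we have $L^{-1}(w-v)=rL^{-1}\delta=r\mathbf e$, hence $X^v(t,\omega)-X^w(t,\omega)=L\bigl(Y^0(t,\omega)-Y^{r\mathbf e}(t,\omega)\bigr)$ and thus $\norm{X^v(t,\omega)-X^w(t,\omega)}=\norm\delta\,\norm{Y^{r\mathbf e}(t,\omega)-Y^0(t,\omega)}$ for every $\omega\in\Omega$; taking expectations and invoking the bound from \cref{prop:lem1} with $\eps\is r$ gives $\bEE{\norm{X^v(t)-X^w(t)}}=\norm\delta\,\bEE{\norm{Y^{r\mathbf e}(t)-Y^0(t)}}\geq\norm\delta\exp(-c_0\abs{\ln(r)}^{2/n})$. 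Since $\norm{v-w}=r\norm\delta$, it then remains to choose $c\in(0,\infty)$ (allowed to depend on $t$, $n$, $\delta$) with $c\,\abs{\ln(r\norm\delta)}^{2/n}\geq c_0\,\abs{\ln(r)}^{2/n}$ for all $r\in(0,\nicefrac1e]$, which one settles by an elementary estimate exploiting that $\abs{\ln(r)}$ and $\abs{\ln(r\norm\delta)}$ differ by the fixed additive constant $\abs{\ln\norm\delta}$ together with sub-additivity of $[0,\infty)\ni x\mapsto x^{2/n}\in[0,\infty)$ (valid since $\tfrac2n\leq1$). I expect this last point --- faithfully transferring the ``intrinsic'' perturbation parameter $\eps=r$ of \cref{prop:lem1} to the geometric distance $\norm{v-w}=r\norm\delta$ and carrying along the prefactor $\norm\delta$ --- to be the only genuinely delicate step; the verification of \cref{it:lem2.1} and \cref{it:lem2.2} is routine bookkeeping inherited from \cref{prop:lem1}, \cref{lem:exist0}, \cref{lem:exist}, and \cref{lem:transformsolution}.
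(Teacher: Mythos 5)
Your proposal follows exactly the same route as the paper's proof: apply \cref{prop:lem1} to obtain $\nu$, $\rho$, $U$, $\kappa_0$ and the processes $Y^x$, take $L=\norm\delta\,Q$ with $Q$ orthogonal mapping the fourth standard basis vector to $\delta/\norm\delta$, transform via \cref{lem:transformsolution}, and push forward $\mu$, $\sigma$, $V$. \Cref{it:lem2.1,it:lem2.2} go through essentially as in the paper (your $V$ carries an extra $+1$, which is harmless). You are also right to single out as the delicate step the passage from the intrinsic perturbation parameter $r$, which is what \cref{prop:lem1} controls, to the geometric distance $\norm{v-w}=r\norm\delta$ that appears in \cref{it:lem2.3}.

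However, the elementary estimate you propose for that last step does not hold. You need a single $c\in(0,\infty)$ with $c\,\abs{\ln(r\norm\delta)}^{2/n}\ge c_0\,\abs{\ln(r)}^{2/n}$ for all $r\in(0,\nicefrac1e]$. When $\norm\delta\ge e$ the value $r=1/\norm\delta$ lies in $(0,\nicefrac1e]$, and there $\ln(r\norm\delta)=0$: the left side vanishes while the right side equals $c_0(\ln\norm\delta)^{2/n}>0$, so no choice of $c$ works. Subadditivity of $x\mapsto x^{2/n}$ only yields $\abs{\ln r}^{2/n}\le\abs{\ln(r\norm\delta)}^{2/n}+\abs{\ln\norm\delta}^{2/n}$, an \emph{additive} correction that cannot be absorbed multiplicatively at points where $\abs{\ln(r\norm\delta)}$ is small. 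It is worth observing that the paper's own proof does not address this step either: after establishing $\bEE{\norm{X^v(t)-X^{v+r\delta}(t)}}\ge\norm\delta\exp(-c\,\abs{\ln(r)}^{2/n})$ it simply declares \cref{it:lem2.3} (stated with $\norm{v-w}=r\norm\delta$ in the exponent) established, so the gap you identified is real and is shared by the paper. A clean repair is either to restrict $r$ in \cref{it:lem2.3} to $(0,\min\{\nicefrac1e,\,1/(e\norm\delta)\}]$, which forces $\abs{\ln(r\norm\delta)}\ge1$ and then $c=c_0(1+\ln\max\{1,\norm\delta\})^{2/n}$ suffices, or to state the lower bound in terms of $\norm{v-w}/\norm\delta$ rather than $\norm{v-w}$; either variant is still sufficient for the downstream application in the proof of \cref{thm:final}.
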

\begin{proof}[Proof of \cref{lem:lem2}]
  Throughout this proof, 
  let $u=(0,0,0,1,0,\dots,0)\in\R^d$,
  let
    $A\in\R^{d\times d}$
  be an orthogonal matrix which satisfies
  \beq
  \label{eq:lem2.defA}
    Au=\frac{\delta}{\norm \delta},
  \eeq
  and let $B\in\R^{d\times d}$ satisfy
  \begin{equation}
    \label{eq:lem2.defB}
    B=\norm\delta A.
  \end{equation}
  Note that 
    \cref{prop:lem1}
    (with 
      $d\is d$,
      $n\is n$,
      $T\is T$,
      $\tau\is\tau$,
      $\norm\cdot\is\norm\cdot$,
      $(\Omega,\mc F,\PP)\is(\Omega,\mc F,\PP)$,
      $W\is W$
    in the notation of \cref{prop:lem1})
  shows that there exist
    $\nu\in C^\infty(\R^d,\R^d)$,
    $\rho\in\R^d$,
    $U\in C^\infty(\R^d,[0,\infty))$,
    $\varkappa\in[1,\infty)$
  which satisfy that
  \begin{enumerate}[label=(\Alph{enumi}),ref=(\Alph{enumi})]
    \item \label{it:lem2.lem1.1}
      it holds for all
        $x,h\in\R^d$, 
        $z\in\R$ 
      that
        $\norm{\nu'(x)h}\leq \varkappa \bp{1+\norm x^\varkappa} \norm h$,
        $U'(x)\nu(x+\rho z)\leq \varkappa(1+\abs{z})U(x)$,
        and $\norm x\leq U(x)$,
    \item
      there exist unique stochastic processes
      $Y^x\colon [0,T]\times\Omega\to\R^d$, $x\in\R^d$, with continuous sample paths
      such that for all 
        $x\in\R^d$,
        $t\in[0,T]$,
        $\omega\in\Omega$
      it holds that
      \beq
        Y^x(t,\omega)=x+\int_0^t\nu(Y^x(s,\omega))\,\diff s+\rho W(t,\omega)
        ,
      \eeq
      and
    \item
    \label{it:lem2.lem1.3}
      it holds for all 
        $t\in(\tau,T)$ 
      that 
        there exists
          $c\in(0,\infty)$ 
        such that
          for all
            $w\in\{ru\colon r\in(0,\nicefrac1e]\}$
          it holds that
          \beq
            \exp\bp{ -c\,\abs{\ln(\norm{w})}^{2/n} }
            \leq 
            \bEE{ \norm{Y^0(t)-Y^w(t)} } 
            .
          \eeq
  \end{enumerate}
  Next let 
    $\mu\in C^\infty(\R^d,\R^d)$,
    $\sigma\in\R^d$,
    and $Z^x\colon [0,T]\times\Omega\to\R^d$, $x\in\R^d$,
    satisfy for all 
      $x\in\R^d$,
      $t\in[0,T]$,
      $\omega\in\Omega$
    that
    \beq
      \label{eq:lem2.defX}
      \mu(x)=B\nu(B^{-1}(x-v)),
      \qquad
      \sigma=B\rho,
      \qqandqq
      Z^x(t,\omega)=BY^{B^{-1}(x-v)}(t,\omega)+v.
    \eeq
  Observe that
    \cref{lem:transformsolution}
    (with
      $d\is d$,
      $m\is 1$,
      $T\is T$,
      $\nu\is\nu$,
      $\rho\is\rho$,
      $v\is v$,
      $(\Omega,\mc F,\PP)\is(\Omega,\mc F,\PP)$,
      $W\is W$,
      $(Y^x)_{x\in\R^d}\is(Y^x)_{x\in\R^d}$,
      $L\is B$,
      $\mu\is\mu$,
      $(X^x)_{x\in\R^d}\is(Z^x)_{x\in\R^d}$
    in the notation of \cref{lem:transformsolution})
  proves that for all 
    $x\in\R^d$,
    $t\in[0,T]$,
    $\omega\in\Omega$
  it holds that
  \begin{equation}
    \label{eq:lem2.Z}
    Z^x(t,\omega)
    =
    x+\int_0^t\mu(Z^x(s,\omega))\,\diff s+\sigma W(t,\omega).
  \end{equation}
  Furthermore, note that
    the chain rule
  implies that for all
    $x\in\R^d$
  it holds that
  \begin{equation}
    \label{eq:lem2.dmu}
    \mu'(x)
    =
    B\nu'(B^{-1}(x-v))B^{-1}
    .
  \end{equation}
  In addition, observe that
    the assumption that $A$ is an orthogonal matrix
  ensures that for all
    $x\in\R^d$
  it holds that
  \begin{equation}
    \label{eq:lem2.B}
    \norm{Bx}=\norm{\norm\delta Ax}=\norm\delta\norm{Ax}=\norm\delta\norm x.
  \end{equation}
  Combining
    this
  with 
    \eqref{eq:lem2.dmu}
    and \cref{it:lem2.lem1.1}
  proves that for all 
    $x,h\in\R^d$ 
  it holds that
  \begin{equation}
    \label{eq:lem2.mu}
    \begin{split}
    \norm{\mu'(x)h}
    &=
    \norm{B\nu'(B^{-1}(x-v))B^{-1}h}
    =
    \norm\delta \norm{\nu'(B^{-1}(x-v))B^{-1}h}
    \\&\leq
    \norm\delta \varkappa \bp{1+\norm{B^{-1}(x-v)}^\varkappa}\norm{B^{-1}h}
    \\&=
    \norm\delta \varkappa \bp{1+\norm\delta^{-1} \norm{x-v}^\varkappa}\norm\delta^{-1}\norm{h}
    \\&=
    \varkappa \bp{1+\norm\delta^{-1} \norm{x-v}^\varkappa}\norm{h}
    \\&\leq
    \varkappa \bp{1+\norm\delta^{-1} 2^{\varkappa}(\norm x^\varkappa+\norm v^\varkappa)}\norm{h}
    \\&=
    \varkappa \bp{1+\norm\delta^{-1} 2^{\varkappa}\norm v^{\varkappa}+\norm\delta^{-1}2^\varkappa\norm x^\varkappa}\norm{h}
    \\&\leq
    \varkappa \bp{1+\norm\delta^{-1} 2^{\varkappa}\max\{1,\norm v^\varkappa\}}\bp{1+\norm x^\varkappa}\norm{h}
    .
    \end{split}
  \end{equation}
  In the next step let
    $\kappa\in[1,\infty)$
  satisfy
  \begin{equation}
    \kappa=2\varkappa \bp{1+\norm\delta^{-1} 2^{\varkappa}\max\{1,\norm v^\varkappa\}}
    .
  \end{equation}
  Note that
    \eqref{eq:lem2.mu}
  demonstrates that for all 
    $x,h\in\R^d$ 
  it holds that
  \begin{equation}
    \label{eq:lem2.mu2}
    \begin{split}
    \norm{\mu'(x)h}
    &\leq
    \varkappa \bp{1+\norm\delta^{-1} 2^{\varkappa}\max\{1,\norm v^\varkappa\}}\bp{2+\norm x^\kappa}\norm{h}
    \\&\leq
    2\varkappa \bp{1+\norm\delta^{-1} 2^{\varkappa}\max\{1,\norm v^\varkappa\}}\bp{1+\norm x^\kappa}\norm{h}
    \\&=
    \kappa\bp{1+\norm x^\kappa}\norm{h}
    .
    \end{split}
  \end{equation}
  Next let $V\colon \R^d\to[0,\infty)$ 
    satisfy for all
      $x\in\R^d$
    that
    \beq
    \label{eq:lem2.defV}
      V(x)=\norm\delta U(B^{-1}(x-v))+\norm{v}.
    \eeq
  Observe that 
    the chain rule
  implies that for all
    $x\in\R^d$
  it holds that
  \begin{equation}
    V'(x)
    =
    \norm\delta U'(B^{-1}(x-v))B^{-1}
    .
  \end{equation}
  Hence, we obtain that for all
    $x\in\R^d$,
    $z\in\R$
  it holds that
  \ba
    \label{eq:lem2.V}
    V'(x)\mu(x+\sigma z)
    &=
    \norm\delta U'(B^{-1}(x-v))B^{-1}\mu(x+\sigma z)
    \\&=
    \norm\delta U'(B^{-1}(x-v))B^{-1}\bbr{B\nu(B^{-1}(x+\sigma z-v))}
    \\&=
    \norm\delta U'(B^{-1}(x-v))\nu(B^{-1}(x-v)+\rho z))
    \\&\leq
    \norm\delta\varkappa(1+\abs z)U(B^{-1}(x-v))
    \\&=
    \varkappa(1+\abs z)(V(x)-\norm v)
    \\&\leq
    \varkappa(1+\abs z)V(x)
    \leq
    \kappa(1+\abs z)V(x)
    .
  \ea
  Furthermore, note that
    \eqref{eq:lem2.defV}
    and \cref{it:lem2.lem1.1}
  ensure that for all
    $x\in\R^d$
  it holds that
  \begin{equation}
  \begin{split}
    V(x)
    % &=
    % \norm\delta U(B^{-1}(x-v))+\norm v
    &\geq
    \norm\delta\norm{B^{-1}(x-v)}+\norm v
    =
    \norm\delta\norm\delta^{-1}\norm{x-v}+\norm v
    \\&=
    \norm{x-v}+\norm v
    \geq
    \norm x
    .
  \end{split}
  \end{equation}
  Combining
    this,
    \eqref{eq:lem2.mu2},
    and \eqref{eq:lem2.V}
  establishes \cref{it:lem2.1}.
    This
    and \cite[Lemma~5.4]{JentzenKuckuckMuellerGronbachYaroslavtseva2019}
  shows that there exist unique stochastic processes 
    $X^x\colon [0,T]\times\Omega\to\R^d$, $x\in\R^d$, with continuous sample paths
  which satisfy for all 
    $x\in\R^d$,
    $t\in[0,T]$,
    $\omega\in\Omega$
  that
  \beq
    \label{eq:lem2.X}
    X^x(t,\omega)=x+\int_0^t\mu(X^x(s,\omega))\,\diff s+\sigma W(t,\omega).
  \eeq
  Therefore, we obtain \cref{it:lem2.2}.
  In addition, observe that
    \eqref{eq:lem2.X}
    and \eqref{eq:lem2.Z}
  imply that for all
    $x\in\R^d$,
    $t\in[0,T]$,
    $\omega\in\Omega$
  it holds that
  \begin{equation}
    X^x(t,\omega)=Z^x(t,\omega).
  \end{equation}
    This,
    \eqref{eq:lem2.defA},
    \eqref{eq:lem2.defB},
    and \eqref{eq:lem2.defX} 
  prove that for all
    $r\in\R$,
    $t\in(\tau,T)$,
    $\omega\in\Omega$
  it holds that
  \begin{equation}
    \begin{split}
    X^v(t,\omega)-X^{v+r\delta}(t,\omega)
    &=
    \bbr{BY^{B^{-1}(v-v)}(t,\omega)+v}
    -
    \bbr{BY^{B^{-1}(v+r\delta-v)}(t,\omega)+v}
    \\&=
    B\bp{Y^{0}(t,\omega)-Y^{r\norm{\delta}^{-1}A^{-1}\delta}(t,\omega)}
    \\&=
    B\bp{Y^{0}(t,\omega)-Y^{ru}(t,\omega)}
    .
    \end{split}
  \end{equation}
  Combining
    this
  with
    \cref{it:lem2.lem1.3}
  shows that for all
    $t\in(\tau,T)$ 
  there exists
    $c\in(0,\infty)$ 
  such that
    for all
      $r\in(0,\nicefrac1e]$
    it holds that
    \ba
      \bEE{ \norm{X^v(t)-X^{v+r\delta}(t)} }
      &=
      \bEE{ \norm{B\bp{Y^{0}(t)-Y^{ru}(t)}} }
      \\&=
      \norm\delta\bEE{ \norm{Y^{0}(t)-Y^{ru}(t)} }
      \\&\geq
      \norm\delta\exp\bp{ -c\,\abs{\ln(r)}^{2/n} }
      .
    \ea
  This establishes \cref{it:lem2.3}.
  The proof of \cref{lem:lem2} is thus completed.
\end{proof}

\subsection[On SDEs with non-locally H\"older continuous dependence on the initial values]{On solutions to SDEs with non-locally H\"older continuous dependence on the initial values}
\label{subsec:final}

\begin{lemma}
\label{lem:hoeldercomp}
  Let $c,R,\alpha\in(0,\infty)$, 
  $K\in[0,\infty)$, 
  $\beta\in(0,1)$
  satisfy
  \beq
    \label{eq:hoeldercomp.Kdef}
    K=\inf\bbbbr{ 
      \{1\} \cup \biggl\{
        \frac{\exp(-c\,\abs{\ln(r)}^\beta)}{r^\alpha}
        \colon 
        r\in\bigl[\exp\bp{-\br{\alpha^{-1}c}^{\frac1{1-\beta}}},\infty\bigr)\cap(0,R]
      \biggr\} 
    }
    .
  \eeq
  Then
  \begin{enumerate}[label=(\roman{*})]
    \item \label{it:hoeldercomp.1}
      it holds for all 
        $r\in \bigl(0,\exp\bp{-[\alpha^{-1}c]^{\frac1{1-\beta}}}\bigr]$
      that
        $\exp(-c\,\abs{\ln(r)}^\beta)\geq r^\alpha$
      and
    \item \label{it:hoeldercomp.2}
      it holds for all $r\in(0,R]$ that 
        $K>0$ 
        and $\exp(-c\,\abs{\ln(r)}^\beta)\geq K r^\alpha$.
  \end{enumerate}
\end{lemma}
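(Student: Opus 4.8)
The plan is to establish \cref{it:hoeldercomp.1} by a direct logarithmic estimate and then to deduce \cref{it:hoeldercomp.2} from it together with a short compactness argument. Throughout I would abbreviate $a=\exp(-[\alpha^{-1}c]^{1/(1-\beta)})\in(0,\infty)$ and $\phi(r)=\frac{\exp(-c\,\abs{\ln(r)}^\beta)}{r^\alpha}$ for $r\in(0,\infty)$, and I note that $a\in(0,1)$ since $c,\alpha\in(0,\infty)$ and $\beta\in(0,1)$, while \eqref{eq:hoeldercomp.Kdef} says exactly that $K=\inf\bp{\{1\}\cup\{\phi(r)\colon r\in[a,\infty)\cap(0,R]\}}$.

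For \cref{it:hoeldercomp.1} I would fix $r\in(0,a]$, so that $r\in(0,1)$ and hence $\abs{\ln(r)}=\ln(\nicefrac1r)\geq\ln(\nicefrac1a)=[\alpha^{-1}c]^{1/(1-\beta)}>0$. Since $(0,\infty)\ni s\mapsto s^{1-\beta}\in(0,\infty)$ is strictly increasing, this yields $\abs{\ln(r)}^{1-\beta}\geq\alpha^{-1}c$, hence $c\,\abs{\ln(r)}^\beta\leq\alpha\,\abs{\ln(r)}^{1-\beta}\abs{\ln(r)}^\beta=\alpha\,\abs{\ln(r)}=-\alpha\ln(r)$, and exponentiating the resulting inequality $-c\,\abs{\ln(r)}^\beta\geq\alpha\ln(r)$ gives $\exp(-c\,\abs{\ln(r)}^\beta)\geq r^\alpha$, which is \cref{it:hoeldercomp.1}.

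For the positivity of $K$ in \cref{it:hoeldercomp.2} I would split into two cases. If $a>R$, then $[a,\infty)\cap(0,R]=\emptyset$, the set in \eqref{eq:hoeldercomp.Kdef} reduces to $\{1\}$, and $K=1>0$. If $a\leq R$, then $[a,\infty)\cap(0,R]=[a,R]$ is a nonempty compact subset of $(0,\infty)$ on which $\phi$ is continuous and strictly positive and therefore attains a strictly positive minimum $m$; since $1$ also belongs to the set, $K=\min\{1,m\}>0$. This compactness step is the only point that goes beyond bookkeeping, so I expect it to be the (rather mild) main obstacle; its role is precisely to keep the interval away from the singularity of $\ln$ at $0$, which is exactly why the lower cutoff $a$ appears in the definition of $K$.

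Finally, for the estimate in \cref{it:hoeldercomp.2} I would take $r\in(0,R]$ and use that $K\leq1$ (as $1$ lies in the set in \eqref{eq:hoeldercomp.Kdef}). If $r\leq a$, then \cref{it:hoeldercomp.1} gives $\phi(r)\geq1\geq K$; if $r>a$, then $r\in[a,\infty)\cap(0,R]$, so $\phi(r)$ is one of the numbers over which the infimum defining $K$ is taken and hence $\phi(r)\geq K$. In either case, multiplying $\phi(r)\geq K$ by $r^\alpha\in(0,\infty)$ turns it into $\exp(-c\,\abs{\ln(r)}^\beta)\geq K r^\alpha$, which completes the proof of \cref{it:hoeldercomp.2}.
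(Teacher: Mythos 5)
Your proof is correct and follows essentially the same route as the paper's: the same logarithmic manipulation for \cref{it:hoeldercomp.1} (rewriting $\ln(r)$ via $\abs{\ln(r)}^{1-\beta}\abs{\ln(r)}^\beta$ and comparing with the threshold), the same continuity-plus-compactness argument for $K>0$, and the same case split $r\leq a$ versus $r>a$ for \cref{it:hoeldercomp.2}. The only difference is cosmetic: you make the empty-set case $a>R$ explicit, which the paper leaves implicit.
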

\begin{proof}[Proof of \cref{lem:hoeldercomp}]
  Throughout this proof let $C\in(0,\infty)$
  \intrtype{be the real number which satisfies }%
  \intrtypen{satisfy }%
  \beq
    C=\exp\bp{-\br{\alpha^{-1} c}^{\frac{1}{1-\beta}}}.
  \eeq
  Note that
    the fact that $C<1$
  shows that for all
    $r\in(0,C]$ 
  it holds that
  \ba
    \ln(r)
    &=
    -\abs{\ln(r)}
    =
    -\abs{\ln(r)}^{1-\beta}\abs{\ln(r)}^\beta
    \leq
    -\abs{\ln(C)}^{1-\beta}\abs{\ln(r)}^{\beta}
    \\&=
    -\bigl\lvert-[\alpha^{-1}c]^{\frac{1}{1-\beta}}\bigr\rvert^{1-\beta}\abs{\ln(r)}^{\beta}
    =-\alpha^{-1}c\,\abs{\ln(r)}^\beta
    .
  \ea
  Hence, we obtain that for all
    $r\in(0,C]$
  it holds that
  \beq
    \label{eq:hoeldercomp.1}
    r^\alpha=\exp(\alpha\ln(r))\leq \exp(-c\,\abs{\ln(r)}^\beta)
    .
  \eeq
  Next observe that
    \eqref{eq:hoeldercomp.Kdef}
  implies that for all
    $r\in[C,\infty)\cap(0,R]$
  it holds that
  \beq
    \label{eq:hoeldercomp.2}
    Kr^\alpha\leq \exp(-c\,\abs{\ln(r)}^\beta).
  \eeq
  Furthermore, note that
    the fact that $(0,\infty)\ni r\mapsto \exp(-c\,\abs{\ln(r)}^\beta)r^{-\alpha}\in(0,\infty)$ is a continuous function
    and the fact that $[C,\infty)\cap(0,R]$ is a compact set
  ensure that $K>0$.
  Combining 
    this 
  with
    \eqref{eq:hoeldercomp.1},
    \eqref{eq:hoeldercomp.2},
    and the fact that $K\leq 1$
  establishes
    \cref{it:hoeldercomp.1,it:hoeldercomp.2}.
  The proof of \cref{lem:hoeldercomp} is thus completed.
\end{proof}

\begin{theorem}
\label{thm:final}
  Let $m\in\N$,
  $d\in\{5,6,\dots\}$,
  %$n\in\{4,5,\dots\}$,
  $T\in(0,\infty)$,
  $\tau\in(0,T)$,
  $v\in\R^d$,
  $\delta\in\R^d\setminus\{0\}$
  let $\norm{\cdot}\colon\R^d\to[0,\infty)$ be the standard norm on $\R^d$,
  let $\nnorm{\cdot}\colon\R^m\to[0,\infty)$ be a norm,
  let $(\Omega,\mc F,\PP)$ be a probability space,
  and let $W\colon[0,T]\times\Omega\to\R^m$ be a standard Brownian motion
    %on $(\Omega,\mc F,\PP)$ 
    with continuous sample paths.
  Then there exist
    $\mu\in C^\infty(\R^d,\R^d)$,
    $\sigma\in\R^{d\times m}$,
    $V\in C^\infty(\R^d,[0,\infty))$,
    $\kappa\in(0,\infty)$
  such that
  \begin{enumerate}[label=(\roman{enumi}),ref=(\roman{enumi})]
    \item \label{it:thmfinal:1} 
    it holds for all
      $x,h\in\R^d$, 
      $z\in\R^m$ 
    that
      $\norm{\mu'(x)h}\leq \kappa \bp{1+\norm x^\kappa} \norm h$,
      $V'(x)\mu(x+\sigma z)\leq \kappa(1+\nnorm{z})V(x)$,
      and $\norm x\leq V(x)$,
    \item \label{it:thmfinal:2}
      there exist unique stochastic processes
      $X^x\colon [0,T]\times\Omega\to\R^d$, $x\in\R^d$, with continuous sample paths
      such that for all 
        $x\in\R^d$,
        $t\in[0,T]$,
        $\omega\in\Omega$
      it holds that
      \beq
        X^x(t,\omega)=x+\int_0^t\mu(X^x(s,\omega))\,\diff s+\sigma W(t,\omega)
        ,
      \eeq
    \item\label{it:thmfinal:3}
      it holds for all 
        $\omega\in\Omega$ 
      that 
        $\bp{[0,T]\times\R^d\ni(t,x)\mapsto X^x(t,\omega)\in\R^d}\in C^{0,1}([0,T]\times\R^d,\R^d)$,
    \item\label{it:thmfinal:4}
      it holds for all
        $x,h\in\R^d$,
        $t\in[0,T]$,
        $\omega\in\Omega$
      that
      \beq
        \bp{{\textstyle\frac\partial{\partial x}} X^x(t,\omega)}(h)
        =
        h+\int_0^t \mu'(X^x(s,\omega))\bp{\bp{\tfrac\partial{\partial x} X^x(s,\omega)}(h)} \,\diff s
        ,
      \eeq
    \item\label{it:thmfinal:50}
    it holds for all 
      $R,r\in(0,\infty)$
    that
      $\Omega\ni\omega\mapsto \sup_{x\in[-R,R]^d}\,\sup_{t\in[0,T]} \bp{\norm{X^x(t,\omega)}^{r}}\in[0,\infty]$
      is an $\mc F/\mc B([0,\infty])$-measurable function,
    \item\label{it:thmfinal:5}
      it holds for all 
        $R,r\in(0,\infty)$
      that
      \beq
        \bbbEE{ \sup_{x\in[-R,R]^d}\,\sup_{t\in[0,T]} \bp{\norm{X^x(t)}^{r}} }<\infty
        ,
      \eeq
    \item\label{it:thmfinal:6}
      it holds for all 
        $R,q\in(0,\infty)$
      that there exists 
        $c\in(0,\infty)$ 
      such that for all 
        $x,y\in[-R,R]^d$ with $0<\norm{x-y}\neq 1$ 
      it holds that
      \beq
        \sup_{t\in[0,T]}\bEE{\norm{X^x(t)-X^y(t)}}\leq c\,\abs{\ln(\norm{x-y})}^{-q}
        ,
      \eeq
      % \item\label{it:thmfinal:7}
      % there exists
      %   $\eps\in(0,\infty)$
      % such that for all 
      %   $t\in(\tau,T)$ 
      % there exists
      %   $c\in(0,\infty)$ 
      % such that
      %   for all
      %     $w\in\{v+r\delta\colon r\in(0,\eps)\}$
      %   it holds that
      %   \beq
      %     \exp\bp{ -c\,\abs{\ln(\norm{v-w})}^{\nicefrac 2n} }
      %     \leq 
      %     \bEE{ \norm{X^v(t)-X^w(t)} } 
      %     ,
      %   \eeq
      \item\label{it:thmfinal:71}
      there exists
        $K\in(0,\infty)$
      such that for all 
        $t\in(\tau,T)$ 
      there exists
        $c\in(0,\infty)$ 
      such that for all
        $w\in\{v+r\delta\colon r\in(0,1]\}$
      it holds that
      \beq
        K \exp\bp{ -c\,\abs{\ln(\norm{v-w})}^{2/n} }
        \leq 
        \bEE{ \norm{X^v(t)-X^w(t)} } 
        ,
      \eeq
      and
    \item\label{it:thmfinal:8}
      it holds for all 
        $t\in(\tau,T)$,
        $\alpha\in(0,\infty)$
      that there exists 
        $c\in(0,\infty)$ 
      such that for all
        $w\in\{v+r\delta\colon r\in[0,1]\}$
      it holds that
      \beq
      \label{eq:finalnonhoelder}
        c\,\norm{v-w}^\alpha
        \leq
        \bEE{ \norm{X^v(t)-X^w(t)} } 
        .
      \eeq
  \end{enumerate}
\end{theorem}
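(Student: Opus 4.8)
The plan is to derive \cref{thm:final} from essentially one application of \cref{lem:lem2}, together with the regularity theory for additive noise driven SDEs with at most polynomially growing drift derivatives and a Lyapunov-type condition developed in \cite{JentzenKuckuckMuellerGronbachYaroslavtseva2019} and the elementary comparison in \cref{lem:hoeldercomp}. First I would reduce to scalar noise: writing $\pi_1\colon\R^m\to\R$ for the projection onto the first coordinate, the process $W_1:=\pi_1\circ W\colon[0,T]\times\Omega\to\R$ is a standard real-valued Brownian motion with continuous sample paths. I would apply \cref{lem:lem2} with $\delta$ replaced by $e\delta\in\R^d\setminus\{0\}$, with $W$ replaced by $W_1$, and all other parameters unchanged, thereby obtaining $\mu\in C^\infty(\R^d,\R^d)$, $\sigma_0\in\R^d$, $V\in C^\infty(\R^d,[0,\infty))$, $\kappa_0\in(0,\infty)$, and unique continuous processes $X^x$, $x\in\R^d$, with $X^x(t,\omega)=x+\int_0^t\mu(X^x(s,\omega))\,\diff s+\sigma_0 W_1(t,\omega)$ for which the conclusions of \cref{lem:lem2} hold. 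Next I would let $\sigma\in\R^{d\times m}$ be the matrix whose first column is $\sigma_0$ and whose remaining columns vanish, so that $\sigma z=z_1\sigma_0$ for $z=(z_1,\dots,z_m)\in\R^m$ and $\sigma W(t,\omega)=\sigma_0 W_1(t,\omega)$; hence the processes $X^x$ solve the SDE in \cref{it:thmfinal:2}, which settles \cref{it:thmfinal:2}. For \cref{it:thmfinal:1} I would keep $\mu$ and $V$, use $\norm x\le V(x)$ and $\norm{\mu'(x)h}\le\kappa_0(1+\norm x^{\kappa_0})\norm h$ from \cref{it:lem2.1}, and combine the Lyapunov estimate in \cref{it:lem2.1} with the elementary bound $\abs{z_1}\le c_\pi\nnorm{z}$, where $c_\pi:=\sup_{\nnorm{y}\le 1}\abs{\pi_1(y)}\in(0,\infty)$, to verify that $\kappa:=2\kappa_0\max\{1,c_\pi\}$ has the required properties. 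Finally, since $\{v+r(e\delta)\colon r\in(0,\nicefrac1e]\}=\{v+r\delta\colon r\in(0,1]\}$ and $\norm{e\delta}=e\norm\delta$, the conclusion \cref{it:lem2.3} of \cref{lem:lem2} (applied with $\delta$ replaced by $e\delta$) immediately yields \cref{it:thmfinal:71} with the $t$-independent constant $K:=e\norm\delta$.

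Second, I would obtain \cref{it:thmfinal:3,it:thmfinal:4,it:thmfinal:5,it:thmfinal:50,it:thmfinal:6} from the theory of \cite{JentzenKuckuckMuellerGronbachYaroslavtseva2019}. By \cref{it:thmfinal:1} the SDE $\diff X^x=\mu(X^x)\,\diff t+\sigma\,\diff W$ is an additive noise driven SDE whose smooth drift coefficient function has at most polynomially growing first derivative and which admits the Lyapunov-type function $V$, so it is covered by the results of that article. Invoking these gives the joint $C^{0,1}$-regularity of $(t,x)\mapsto X^x(t,\omega)$ and the associated variational equation of \cref{it:thmfinal:3,it:thmfinal:4}, the uniform moment bound of \cref{it:thmfinal:5}, and the logarithmic H\"older estimate of \cref{it:thmfinal:6} (the last being \cite[Theorem~8.4]{JentzenKuckuckMuellerGronbachYaroslavtseva2019}; for $\norm{x-y}$ bounded away from $0$ and $1$ it is in any case immediate from the moment bound, so only small $\norm{x-y}$ relies on the cited estimate). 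For \cref{it:thmfinal:50} I would use \cref{it:thmfinal:3}: by continuity of $(t,x)\mapsto\norm{X^x(t,\omega)}^r$ the supremum over $[-R,R]^d\times[0,T]$ coincides with the supremum over a fixed countable dense subset, hence is a countable supremum of $\mc F/\mc B([0,\infty])$-measurable functions and thus itself $\mc F/\mc B([0,\infty])$-measurable.

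Third, I would deduce \cref{it:thmfinal:8} from \cref{it:thmfinal:71}. Fixing $t\in(\tau,T)$ and $\alpha\in(0,\infty)$ and letting $c_t\in(0,\infty)$ be as in \cref{it:thmfinal:71}, an application of \cref{lem:hoeldercomp} with $c\is c_t$, $R\is\norm\delta$, $\alpha\is\alpha$, and $\beta\is\nicefrac{2}{n}$ (which lies in $(0,1)$) produces $K'\in(0,\infty)$ with $\exp(-c_t\,\abs{\ln(r)}^{2/n})\ge K'r^\alpha$ for all $r\in(0,\norm\delta]$. For $w=v+r\delta$ with $r\in(0,1]$ one has $\norm{v-w}=r\norm\delta\in(0,\norm\delta]$, so \cref{it:thmfinal:71} gives $\bEE{\norm{X^v(t)-X^w(t)}}\ge K\exp(-c_t\,\abs{\ln(\norm{v-w})}^{2/n})\ge KK'\norm{v-w}^\alpha$; for $w=v$ both sides vanish. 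Thus \cref{it:thmfinal:8} holds with $c:=KK'$.

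The one genuinely non-mechanical ingredient is the rescaling $\delta\is e\delta$ in the application of \cref{lem:lem2}: it promotes the range $r\in(0,\nicefrac1e]$ of the lower bound in \cref{it:lem2.3} to the full range $r\in(0,1]$ required in \cref{it:thmfinal:71}, while keeping the leading constant independent of $t$. The remaining points --- the scalar/vector reduction of the noise used for \cref{it:thmfinal:1,it:thmfinal:2} and the precise invocation of the regularity theorems of \cite{JentzenKuckuckMuellerGronbachYaroslavtseva2019} for \cref{it:thmfinal:3,it:thmfinal:4,it:thmfinal:5,it:thmfinal:6}, whose hypotheses are exactly \cref{it:thmfinal:1} --- are routine.
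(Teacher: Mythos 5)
Your proposal is correct and follows essentially the same route as the paper's proof: apply \cref{lem:lem2} with $\delta\is e\delta$ and the first Brownian coordinate $W_1$, pad $\sigma_0$ into a $d\times m$ matrix, invoke \cite[Lemmas~5.4, 6.6, and 8.4]{JentzenKuckuckMuellerGronbachYaroslavtseva2019} for \cref{it:thmfinal:3,it:thmfinal:4,it:thmfinal:5,it:thmfinal:50,it:thmfinal:6}, and finally combine \cref{it:thmfinal:71} with \cref{lem:hoeldercomp} to get \cref{it:thmfinal:8}. The only cosmetic difference is that the paper uses the slightly coarser constant $K=\max\{1,\sup_{z\neq0}\sum_i\abs{z_i}/\nnorm{z}\}$ in place of your $c_\pi$ and cites \cite[Lemma~6.6]{JentzenKuckuckMuellerGronbachYaroslavtseva2019} for \cref{it:thmfinal:50} rather than spelling out the countable-dense-subset argument.
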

\begin{proof}[Proof of \cref{thm:final}]
  \newcommand{\vmu}{\mu}
  \newcommand{\vrho}{\rho}
  \newcommand{\vokappa}{\varkappa}
  \newcommand{\vV}{V}
  Throughout this proof
    let $K\in(0,\infty)$ satisfy
    \beq
      K=\max\Biggl\{1,\,\sup\nolimits_{z=(z_1,z_2,\dots,z_m)\in\R^m\setminus\{0\}} \frac{\sum_{i=1}^m\abs{z_i}}{\nnorm{z}}\Biggr\}
      .
    \eeq
    and let $W_i\colon[0,T]\times\Omega\to\R$, $i\in\{1,2,\dots,m\}$,
      satisfy for all
        $t\in[0,T]$,
        $\omega\in\Omega$
      that
      \beq
        W(t,\omega)
        =
        \bp{W_1(t,\omega),W_2(t,\omega),\dots,W_m(t,\omega)}
        .
      \eeq
  Note that \cref{lem:lem2}
    (with
      $d\is d$,
      $T\is T$,
      $\tau\is\tau$,
      $v\is v$,
      $\delta\is e\delta$,
      $\norm{\cdot}\is\norm{\cdot}$,
      $(\Omega,\mc F,\PP)\is(\Omega,\mc F,\PP)$,
      $W\is W_1$
    in the notation of \cref{lem:lem2})
  establishes that there exist
    $\vmu\in C^\infty(\R^d,\R^d)$,
    $\vrho=(\vrho_1,\vrho_2,\dots,\vrho_d)\in\R^d$,
    $\vV\in C^\infty(\R^d,[0,\infty))$,
    $\vokappa\in(0,\infty)$
  which satisfy that
  \begin{enumerate}[label=(\Alph{enumi}),ref=(\Alph{enumi})]
    \item \label{it:muVcond} 
      it holds for all
        $x,h\in\R^d$, 
        $z\in\R$ 
      that
        $\norm{\vmu'(x)h}\leq \vokappa \bp{1+\norm x^\vokappa} \norm h$,
        $\vV'(x)\vmu(x+\vrho z)\leq \vokappa(1+\abs{z})\vV(x)$,
        and $\norm x\leq \vV(x)$,
    \item
      there exist unique stochastic processes
        $X^x\colon [0,T]\times\Omega\to\R^d$, $x\in\R^d$, with continuous sample paths
      such that for all 
        $x\in\R^d$,
        $t\in[0,T]$,
        $\omega\in\Omega$
      it holds that
      \beq
        \label{eq:Xdef}
        X^x(t,\omega)=x+\int_0^t\vmu(X^x(s,\omega))\,\diff s+\vrho W_1(t,\omega)
        ,
      \eeq
      and
    \item \label{it:strongnonhoelder} 
      it holds for all 
        $t\in(\tau,T)$ 
      that 
        there exists
          $c\in(0,\infty)$ 
        such that
          for all
            $w\in\{v+r\delta\colon r\in(0,1]\}$
          it holds that
          \beq
            e\norm\delta\exp\bp{ -c\,\abs{\ln(\norm{v-w})}^{2/n} }
            \leq 
            \bEE{ \norm{X^v(t)-X^w(t)} } 
            .
          \eeq
  \end{enumerate}
  In the next step let 
    $\kappa\in(0,\infty)$
    and
    $\sigma=(\sigma_{i,j})_{i\in\{1,2,\dots,d\},\,j\in\{1,2,\dots,m\}}\in\R^{d\times m}$ 
    satisfy for all
      $i\in\{1,2,\dots,d\}$,
      $j\in\{1,2,\dots,m\}$
    that
    \beq
    \label{eq:sigmadef}
    \kappa=2K\vokappa
    \qqandqq
      \sigma_{i,j}
      =
      \begin{cases}
        \vrho_i & \colon j=1 \\
        0 & \colon j>1.
      \end{cases}
    \eeq
  Observe that 
    \cref{it:muVcond}
  ensures that for all 
    $x\in\R^d$,
    $z=(z_1,z_2,\dots,z_m)\in\R^m$
  it holds that
  \ba
    \vV'(x)\vmu(x+\sigma z)
    &=
    \vV'(x)\vmu(x+\vrho z_1)
    \leq
    \vokappa(1+\abs{z_1})V(x)
    \\&\leq
    \vokappa(1+K\nnorm{z})V(x)
    \leq
    \kappa (1+\nnorm{z})V(x)
    .
  \ea
  Furthermore, note that
    \cref{it:muVcond}
  shows that for all
    $x,h\in\R^d$,
    $z\in\R$
  it holds that
  \begin{equation}
    \norm{\mu'(x)h}
    \leq
    \vokappa(2+\norm x^\kappa)\,\norm h
    \leq
    2\vokappa(1+\norm x^\kappa)\,\norm h
    \leq
    \kappa(1+\norm x^\kappa)\,\norm h
    .
  \end{equation}
    This
    and \cref{it:muVcond}
  prove \cref{it:thmfinal:1}.
  In the next step we observe that
    \eqref{eq:sigmadef}
  implies that for all
    $t\in[0,T]$,
    $\omega\in\Omega$
  it holds that
  \begin{equation}
    \sigma W(t,\omega)
    =
    \rho W_1(t,\omega)
    .
  \end{equation}
    This 
    and \eqref{eq:Xdef}
  establish \cref{it:thmfinal:2}.
  Next note that 
    \cite[Lemma~5.4]{JentzenKuckuckMuellerGronbachYaroslavtseva2019} 
  proves \cref{it:thmfinal:3,it:thmfinal:4}.
  In addition, observe that
    %\cref{lem:regcrit}
    \cite[Lemma~6.6]{JentzenKuckuckMuellerGronbachYaroslavtseva2019}
  demonstrates \cref{it:thmfinal:50,it:thmfinal:5}.
  Moreover, note that
    %\cref{thm:finalthm}
    \cite[Lemma~8.4]{JentzenKuckuckMuellerGronbachYaroslavtseva2019}
  establishes \cref{it:thmfinal:6}.
  Next observe that
    \cref{it:strongnonhoelder}
  implies
    \cref{it:thmfinal:71}.
  Combining
    \cref{it:thmfinal:71}
    with \cref{lem:hoeldercomp}
  proves \cref{it:thmfinal:8}.
  The proof of \cref{thm:final} is thus completed.
\end{proof}

%\nocite{*} %to have all bib entries

\bibliographystyle{acm}
\bibliography{mgy}

\end{document}